\newcommand{\blue}{\text{blue}}
\newcommand{\red}{\text{red}}
\newcommand{\norm}[1]{\left\lVert#1\right\rVert}
\newcommand{\trcon}{$\triangle$-connected}
\newcommand{\trequiv}{\sim_{\!\!\bigtriangleup}}
\newcommand{\nottrequiv}{\nsim_{\!\!\bigtriangleup}}
\newcommand{\NN}{\mathbb{N}}
\newcommand{\RR}{\mathbb{R}}
\newcommand{\CC}{\mathbb{C}}
\newcommand{\ci}{\mathrm{i}}
\newtheorem{thm}{Theorem}[section]
\newtheorem{lem}[thm]{Lemma}
\newtheorem{cor}[thm]{Corollary}
\newtheorem{prop}[thm]{Proposition}
\theoremstyle{definition}
\newtheorem{defn}[thm]{Definition}
\newtheorem{exmp}[thm]{Example}
\newtheorem{conj}[thm]{Conjecture}
\tikzstyle{vertex}=[circle, draw, fill=black, inner sep=0pt, minimum size=4pt]
\tikzstyle{edge}=[line width=1.5pt,black!50!white]
\tikzstyle{gridp}=[inner sep=1pt,circle,fill=black!70!white]
\tikzstyle{gridl}=[black!50!white]
\tikzstyle{lnode}=[circle,white,draw=black!60!white,fill=black!60!white,inner sep=1pt]
\tikzstyle{cnode}=[circle,draw=black!60!white,fill=black!60!white,inner sep=1.5pt]
\tikzstyle{redge}=[edge,Red]
\tikzstyle{bedge}=[edge,NavyBlue]
\colorlet{ncol}{Green!60!black}
\tikzstyle{nvertex}=[vertex, draw=ncol, fill=ncol]
\tikzstyle{edgeq}=[edge,gray!60,densely dashed]
\tikzstyle{nedge}=[edge,ncol]
\tikzstyle{oedge}=[edge,Red!60!black]
\begin{document}

\title{Graphs with Flexible Labelings}

\author{%
Georg Grasegger\thanks{Johann Radon Institute for Computational and Applied Mathematics (RICAM), Austrian Academy of Sciences}
\and 
Jan Legersk\'y\thanks{Research Institute for Symbolic Computation (RISC), Johannes Kepler University Linz}
\and 
Josef Schicho\footnotemark[2]
}

\date{}

\maketitle

\footnotetext{
	\mbox{}\\[0.5ex]
	\begin{minipage}[t]{0.8\textwidth}
		This project has received funding from the European Union's Horizon~2020 research and innovation programme under the Marie Sk\l{}odowska-Curie grant agreement No~675789.
	\end{minipage}\hfill 
	\begin{minipage}[t]{0.12\textwidth}
		\vspace{-1em}
		\includegraphics[width=0.95\textwidth]{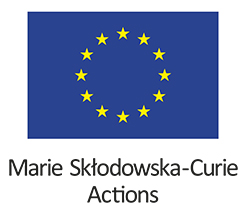}
	\end{minipage}
	Partially supported by the Austrian Science Fund (FWF): P26607, W1214-N15 (project DK9); and by the Upper Austrian Government
}

\begin{abstract}
For a flexible labeling of a graph, it is possible to construct infinitely many non-equivalent
realizations keeping the distances of connected points constant. We give a combinatorial characterization
of graphs that have flexible labelings, possibly non-generic. The characterization is based on colorings of the edges
with restrictions on the cycles. Furthermore, we give necessary criteria and sufficient ones for the existence of such colorings.
\end{abstract}

\section{Introduction}
Given a graph together with a labeling of its edges by positive real numbers, we are interested in the
set of all functions from the set of vertices to the real plane such that the distance between any
two connected vertices is equal to the label of the edge connecting them. Apparently, any such
``realization'' gives rise to infinitely many equivalent ones, by the action of the group of 
Euclidean congruence transformations. If the set of equivalence classes is finite and non-empty, then we say
that the labeled graph is rigid; if this set is infinite, then we say that the labeling is
flexible.

The main result in this paper is a combinatorial characterization of graphs that have a flexible
labeling. The characterization of graphs such that a generically chosen assignment of the vertices 
to the real plane gives a flexible labeling is classical: by a theorem of Geiringer-Pollaczek~\cite{Geiringer1927} which was rediscovered by Laman~\cite{Laman1970},
this is true if the graph contains no Laman subgraph with the same set of vertices. Here, a graph
$G=(V_G,E_G)$ is called a Laman graph if and only if $|E_G|=2|V_G|-3$ and $|E_H|\le 2|V_H|-3$ for any subgraph 
$H=(V_H,E_H)$ of~$G$. So, if either $|E_G|<2|V_G|-3$, or if $|E_G|=2|V_G|-3$ and there is a subgraph~$H$ 
such that $|E_H|>2|V_H|-3$, then there exists a flexible labeling. 

However, there exist also many Laman graphs that have flexible labelings.
These labelings are necessarily non-generic.
An example is the 3-prism with all lengths equal (see Figure~\ref{fig:3prism}).
\begin{figure}
  \begin{center}
    \pgfdeclarelayer{background}
		\pgfsetlayers{background,main}
    \begin{tikzpicture}[scale=2]
      \node[vertex] (a) at (0,0) {};
      \node[vertex] (b) at (1,0) {};
      \node[vertex] (c) at (0.5,0.5) {};
      \node[vertex] (d) at (0,1) {};
      \node[vertex] (e) at (1,1) {};
      \node[vertex] (f) at (0.5,1.5) {};
      
      \draw[edge] (a)edge(b) (b)edge(c) (c)edge(a) (a)edge(d) (d)edge(e) (e)edge(f) (f)edge(d) (b)edge(e) (c)edge(f);
      \begin{scope}[]
				\node[vertex] (d2) at (0.6,0.8) {};
				\node[vertex] (e2) at (1.6,0.8) {};
				\node[vertex] (f2) at (1.1,1.3) {};
				
				\draw[edge,black!20!white] (a)edge(b) (b)edge(c) (c)edge(a) (a)edge(d2) (d2)edge(e2) (e2)edge(f2) (f2)edge(d2) (b)edge(e2) (c)edge(f2);
      \end{scope}
      \begin{pgfonlayer}{background}
        \draw[dotted,black!40!white,line width=2pt] (d) arc[start angle=90,end angle=53.1301,radius=1cm] (d2);
        \draw[dotted,black!40!white,line width=2pt] (e) arc[start angle=90,end angle=53.1301,radius=1cm] (e2);
        \draw[dotted,black!40!white,line width=2pt] (f) arc[start angle=90,end angle=53.1301,radius=1cm] (f2);
      \end{pgfonlayer}
    \end{tikzpicture}
  \end{center}
  \caption{A flexible labeling of the 3-prism graph: if the edges are labeled by the edge lengths, then there are infinitely
  many other realizations where the distances are equal to these labels.}
  \label{fig:3prism}
\end{figure}
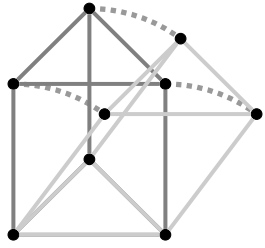
Another example are the bipartite graphs: Dixon~\cite{Dixon} has shown that there are two types of flexible realizations
(see also~\cite{Maehara2001,Stachel,WalterHusty}). 
Flexible instances of graphs in different context are as well considered in~\cite{Fekete2015,Jackson2015,Stachel}.

There are also graphs with more than $2|V_G|-3$ edges allowing flexible labelings.
The maximal number of edges 
for a graph with~$n$ vertices that has a flexible labeling is $\frac{n^2-3n+4}{2}$
(see Section~\ref{sec:examples} and Figure~\ref{fig:manyedges} for examples). 
\begin{figure} 
  \begin{center}
    \begin{tikzpicture}[scale=1.7]
      \node[vertex] (a) at (0.587785, -0.809017) {};
      \node[vertex] (b) at (0.951057, 0.309017) {};
      \node[vertex] (c) at (0., 1.) {};
      \node[vertex] (d) at (-0.951057, 0.309017) {};
      \node[vertex] (e) at (-0.587785, -0.809017) {};
      \node[vertex] (f) at (0,2) {};
      
      \draw[edge] (a)edge(b) (a)edge(c) (a)edge(d) (a)edge(e) (b)edge(c) (b)edge(d) (b)edge(e) (c)edge(d) (c)edge(e) (d)edge(e) (c)edge(f);
      
      \begin{scope}[xshift=3cm]
        \node[vertex] (a) at (0., -1.) {};
				\node[vertex] (b) at (0.951057, -0.309017) {};
				\node[vertex] (c) at (0.587785, 0.809017) {};
				\node[vertex] (d) at (-0.587785, 0.809017) {};
				\node[vertex] (e) at (-0.951057, -0.309017) {};
				\node[vertex] (f) at (0,1.84582) {};
				
				\draw[edge] (a)edge(b) (a)edge(c) (a)edge(d) (a)edge(e) (b)edge(c) (b)edge(d) (b)edge(e) (c)edge(e) (d)edge(e) (d)edge(f) (c)edge(f);
      \end{scope}
    \end{tikzpicture}
  \end{center}
  \caption{Two examples of graphs with flexible labeling on 6 vertices and 11 edges (which is the maximal number). To see that the right graph
    has a flexible labeling, one has to consider realizations for which the two vertices connected to the top vertex are mapped to the
    the same point in the plane.}
  \label{fig:manyedges}
\end{figure}
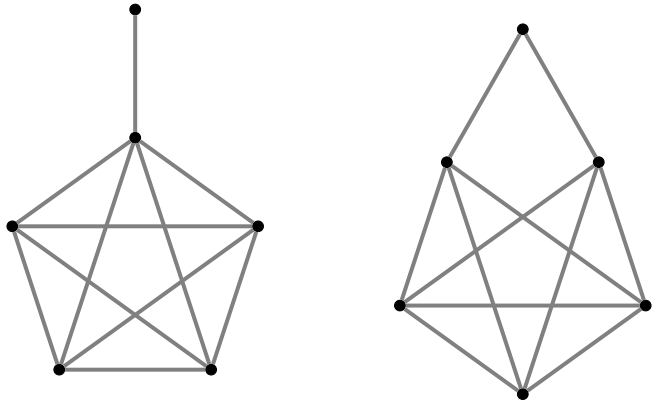

The main part of our paper deals with general graphs,
but for some sections we specifically consider Laman graphs (minimally generically rigid).
We refer to~\cite{Graver1993} for comparing different notions of rigidity.
There are also various notions of genericity (see~\cite{Streinu2008} for a summary),
whereas flexible labelings within this paper are not assumed to be generic.

In order to characterize graphs with a flexible labeling, we introduce the concept of a NAC-coloring (NAC is short for ``no almost cycle''). 
This is a coloring of the edges by two colors, \red{} and \blue{}, such that there is no cycle with all but one edges 
in the same color. We show that any graph with a flexible labeling does have a NAC-coloring.
Conversely, for any graph and any NAC-coloring, we construct a flexible labeling. In Section~\ref{sec:examples}, 
we give various necessary and sufficient conditions on a graph for the existence of a flexible labeling. Section~\ref{sec:conjecture}
gives a conjecture on the existence of flexible labelings in the case of a Laman graph, with some partial answers supporting
the conjecture.
In contrast to many difficult open problems about three-dimensional rigidity,
the question of existence of a flexible labeling turns out to be rather easy, see Section~\ref{sec:threeDim}.

\section{Labelings of Graphs and their Realizations}

In this section, we define labelings of graphs, realizations of labeled graphs, and three properties of graphs:
realizable, rigid, and flexible. We also define NAC-colorings of graphs and give a lemma providing a convenient way 
to decide whether a given coloring is NAC.

In the paper, we consider only undirected graphs without multiple edges or self-loops.
The set of vertices of a graph~$G$ is denoted by~$V_G$ and the set of edges by~$E_G$.
We write~$e=uv$ for an edge~$e\in E_G$ connecting vertices~$u$ and~$v$ in~$V_G$ for shorter notation,
but the fact that the edge can be viewed as the set~$\{u,v\}$ is also used.

An edge labeling by positive real numbers is used to prescribe distances between vertices connected by an edge.
A \emph{realization} is a drawing of~$G$ into the plane satisfying given constraints.
Obviously, we can obtain infinitely many realizations from a given one by rotations and translations.
Let us formalize these concepts.
\begin{defn}
	Let~$G$ be a graph such that $|E_G|\geq 1$ and let $\lambda\colon E_G\rightarrow \RR_+$ be an edge labeling of~$G$.
	A map $\rho=(\rho_x,\rho_y)\colon V_G\rightarrow \RR^2$ is a \emph{realization} of~$G$ compatible
	with~$\lambda$ iff $\norm{\rho(u)-\rho(v)}=\lambda(uv)$ for all edges~$uv\in E_G$.
	We say that two realizations~$\rho_1$ and~$\rho_2$ are equivalent
	iff there exists a direct Euclidean isometry~$\sigma$ of~$\RR^2$ such that $\rho_1=\sigma \circ\rho_2$.
\end{defn}
We stress that a realization is not required to be injective -- nonadjacent vertices can overlap.
Nevertheless, adjacent vertices need to be mapped to different points, since lengths of edges are positive.

Definition of rigid and flexible labelings are well established though they exist in various versions.
Here we present the one used within this paper.
\begin{defn}
	A labeling $\lambda\colon E_G\rightarrow \RR_+$ of a graph~$G$ is called
	\begin{enumerate}
		\item \emph{realizable} if there is a realization of~$G$ compatible with~$\lambda$,
		\item \emph{rigid} if it is realizable and the number of realizations of~$G$ compatible with~$\lambda$ up to equivalence is finite,
		\item \emph{flexible} if the number of realizations of~$G$ compatible with~$\lambda$ up to equivalence is infinite.
	\end{enumerate}
\end{defn}

The constraints on distances between vertices given by~$\lambda$ can also be expressed
in terms of algebraic equations in variables~$x_u, y_u$ for~$u\in V_G$, which represent the coordinates of~$u$.
In order to choose only one representative of each equivalence class, we choose one edge~$\bar{u}\bar{v}\in E_G$ and we fix its position.
The system of equations is
\begin{align} \label{eq:mainSystemOfEquations}
	x_{\bar{u}}=0\,, \quad	y_{\bar{u}}&=0\,, \nonumber \\
	x_{\bar{v}}=\lambda(\bar{u}\bar{v})\,, \quad y_{\bar{v}}&=0\,, \\
	(x_u-x_v)^2+(y_u-y_v)^2&= \lambda(uv)^2 \quad \text{ for all } uv \in E_G.	\nonumber
\end{align}		
As in the previous definition, the labeling~$\lambda$ is realizable, resp.\ rigid, resp.\ flexible,
if the number of solutions $(x_u,y_u)_{u\in V_G} \in (\RR^2)^{V_G}$ of the system~\eqref{eq:mainSystemOfEquations} is positive,
resp.\ finite and positive, resp.\ infinite. The realization~$\rho$ corresponding to a solution $(x_u,y_u)_{u\in V_G}$ is given by~$\rho(u)=(x_u,y_u)$.

The main result of this paper is that existence of a~$\lambda$ for which \eqref{eq:mainSystemOfEquations}
has infinitely many solutions can be characterized by a coloring of~$G$ with a certain property that is introduced now.
We remark that there is no requirement on colors of adjacent edges in contrary to edge colorings often used in graph theory.
\begin{defn}
	Let~$G$ be a graph and $\delta\colon  E_G\rightarrow \{\text{\blue{}, \red{}}\}$ be a coloring of edges.
	\begin{enumerate}
		\item A path, resp.\ cycle, in~$G$ is a \emph{\red{} path}, resp.\ \emph{\red{} cycle}, if all its edges are \red.
		\item A cycle in~$G$ is an \emph{almost \red{} cycle}, if exactly one of its edges is \blue{}.
		\item The notions of \emph{\blue{} path, \blue{} cycle} and \emph{almost \blue{} cycle} are defined analogously.
	\end{enumerate}
	A coloring~$\delta$ is called a \emph{NAC-coloring}, if it is surjective and there are no almost \blue{} cycles or almost \red{} cycles in~$G$.
\end{defn}
The shortcut NAC stands for "No Almost Cycles". We will see later how this property naturally arises for graphs with a flexible labeling.
Lemma~\ref{lem:equivCycleCond} below provides a characterization that can be verified easily.
Two subgraphs of $G$ induced by a coloring~$\delta$ are used, namely, $G^\delta_\red{}=(V_G,E^\delta_\red{})$ and $G^\delta_\blue{}=(V_G,E^\delta_\blue{})$,
where $E^\delta_\red{}=\{e\in E_G \colon \delta(e)=\red{}\}$ and
$E^\delta_\blue{}=\{e\in E_G \colon \delta(e)=\blue{}\}$.

\begin{lem}\label{lem:equivCycleCond}
	Let~$G$ be a graph. If $\delta\colon E_G\rightarrow \{\blue{}, \red{}\}$ is a coloring of edges,
	then there are no almost \blue{} cycles or almost \red{} cycles in~$G$ if and only if
	the connected components of $G^\delta_\red{}$ and $G^\delta_{\blue{}}$ are induced subgraphs of~$G$.
\end{lem}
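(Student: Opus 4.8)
The plan is to first translate the condition about induced subgraphs into a purely edge-theoretic statement, and then to observe that both implications follow from a single elementary construction relating chords to almost cycles.

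First I would unpack the right-hand side. Let $C$ be a connected component of $G^\delta_{\red}$ and let $W\subseteq V_G$ be its vertex set. By definition $C$ already contains every \red{} edge of $G$ joining two vertices of $W$ (a connected component carries all edges of its graph between its own vertices), so $C$ is an induced subgraph of $G$ if and only if $W$ spans no \blue{} edge of $G$. Hence the components of $G^\delta_{\red}$ are induced exactly when no \blue{} edge has both endpoints in a single \red{} component, and symmetrically for $G^\delta_{\blue}$. Thus the claim reduces to: there is no almost \red{} cycle and no almost \blue{} cycle if and only if no \blue{} edge lies inside a \red{} component and no \red{} edge lies inside a \blue{} component.

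For the implication that the absence of almost cycles forces the components to be induced, I would argue by contraposition. Suppose some component of $G^\delta_{\red}$, with vertex set $W$, is not induced; then there is a \blue{} edge $e=uv$ with $u,v\in W$. Since $u$ and $v$ lie in the same \red{} component, there is a simple \red{} path $P$ from $u$ to $v$. Because $G$ has no multiple edges and $e$ is \blue{}, there is no \red{} edge $uv$, so $P$ has length at least two, and $P$ together with $e$ is a genuine simple cycle whose only \blue{} edge is $e$, i.e.\ an almost \red{} cycle. Exchanging the two colors handles $G^\delta_{\blue}$ in the same way.

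The converse reads the same picture backwards: given an almost \red{} cycle with unique \blue{} edge $e=uv$, deleting $e$ leaves a \red{} path from $u$ to $v$, so $u$ and $v$ lie in one \red{} component while $e$ is a \blue{} edge inside it; hence some \red{} component fails to be induced, and symmetrically an almost \blue{} cycle exhibits a non-induced \blue{} component. I expect no serious obstacle here beyond pinning down the correct reformulation of ``induced''; the one point to check carefully is that the cycle produced from a chord is a true simple cycle, which is precisely where the hypothesis that $G$ has no multiple edges is used.
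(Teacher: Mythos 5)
Your proposal is correct and follows essentially the same route as the paper's proof: a non-induced \red{} component yields a \blue{} edge whose endpoints are joined by a \red{} path, producing an almost \red{} cycle, and conversely an almost \red{} cycle exhibits a \blue{} edge inside a \red{} component. Your version merely spells out details the paper leaves implicit -- the reformulation of ``induced'' and the check (via the absence of multiple edges) that the constructed cycle is simple -- but the underlying argument is identical.
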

\begin{proof}
	$\implies$:
		Let $M$ be a connected component of $G^\delta_\red{}$.
		Assume that $M$ is not an induced subgraph of~$G$, i.e.,
		there exists an edge~$uv \in E_G$ such that~$u,v\in V_M$ but~$uv \notin E_M$.
		Hence, $uv$ is \blue{} and there exists an almost \red{} cycle
		which is a contradiction.
		This works analogously for  connected components of $G^\delta_\blue{}$.

	$\impliedby$:
		Assume that there exists an almost \red{} cycle with a \blue{} edge~$uv$.
		The vertices~$u$ and~$v$ belong to the same connected component of $G^\delta_\red{}$ but~$uv$ is \blue{},
		i.e., $uv\notin E^\delta_\red{}$ which contradicts that all connected components
		of $G^\delta_\red{}$ are induced subgraphs of~$G$.
		Similarly for an almost \blue{} cycle.
\qed\end{proof}

Using Lemma~\ref{lem:equivCycleCond},
there is a straightforward algorithm checking whether a given coloring $\delta$ is a NAC-coloring:
we construct $G^\delta_\red{}$ and $G^\delta_\blue{}$.
If $G^\delta_\red{}$ or $G^\delta_\blue{}$ has no edges,
then $\delta$ is not surjective, thus, it is not a NAC-coloring.
Otherwise, we decompose $G^\delta_\red{}$ and $G^\delta_\blue{}$ into connected components
and check whether these components are induced subgraphs of~$G$.

\section{Flexibility and NAC-colorings}

In this section, we state and prove our main result: a connected graph has a flexible labeling if and only if it has a NAC-coloring. Both
directions of the proof contain a construction: for a given curve of realizations of a graph with a fixed flexible labeling, we construct
a NAC-coloring of the graph. In the other direction, for a given NAC-coloring of a graph, we construct a flexible
labeling and a curve of realizations.

\begin{thm}\label{thm:nacflexible}
	A connected graph~$G$ with at least one edge has a flexible labeling iff it has a NAC-coloring.
\end{thm}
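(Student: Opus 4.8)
The statement is an equivalence, so I would prove the two implications independently.

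For the direction that a NAC-coloring $\delta$ yields a flexible labeling I would give an explicit construction. Let $R_1,\dots,R_p$ be the connected components of $G^\delta_\red{}$ and $B_1,\dots,B_q$ those of $G^\delta_\blue{}$; since both subgraphs span $V_G$, every vertex $v$ lies in a unique red component $R_{i(v)}$ and a unique blue component $B_{j(v)}$. Choosing reals $r_1,\dots,r_p$ that are pairwise distinct and likewise distinct $s_1,\dots,s_q$, I would set, for a parameter $t$,
\[ \rho_t(v) = r_{i(v)}\,(1,0) + s_{j(v)}\,(\cos t,\sin t)\,. \]
Along a red edge $uv$ one has $r_{i(u)}=r_{i(v)}$, so $\rho_t(u)-\rho_t(v)=(s_{j(u)}-s_{j(v)})(\cos t,\sin t)$ has the $t$-independent length $|s_{j(u)}-s_{j(v)}|$; dually a blue edge has the constant length $|r_{i(u)}-r_{i(v)}|$. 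Defining $\lambda$ by these values makes every $\rho_t$ a realization compatible with $\lambda$. To see that $\lambda$ is positive I would invoke Lemma~\ref{lem:equivCycleCond}: the endpoints of a red edge cannot lie in the same blue component, since a blue path joining them together with the red edge would be an almost \blue{} cycle; hence $s_{j(u)}\neq s_{j(v)}$, and symmetrically for blue edges. Finally, surjectivity of $\delta$ provides a red edge and a blue edge, whose supporting lines meet at the angle $t \pmod \pi$; as a Euclidean isometry preserves this angle, two realizations $\rho_{t_1},\rho_{t_2}$ can be equivalent only if $t_1\equiv\pm t_2\pmod\pi$, leaving infinitely many equivalence classes, so $\lambda$ is flexible.

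For the converse I would pass to the complex setting. Writing $W_v=x_v+\ci y_v$ and $Z_v=x_v-\ci y_v$, each edge equation in~\eqref{eq:mainSystemOfEquations} factors as $(W_u-W_v)(Z_u-Z_v)=\lambda(uv)^2$, a nonzero constant. A flexible labeling yields infinitely many complex solutions, so the solution set has a positive-dimensional component; I would take an irreducible curve $C$ inside it and work on its smooth projective model, where $K=\CC(C)$ is the function field and the $W_v,Z_v$ are rational functions. For a place $p$ with valuation $\nu$ the factorization forces $\nu(W_u-W_v)=-\nu(Z_u-Z_v)$ on every edge. I would color $uv$ \red{} when $\nu(W_u-W_v)>0$ and \blue{} otherwise. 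Around any cycle the telescoping sums $\sum(W_{u_k}-W_{u_{k+1}})$ and $\sum(Z_{u_k}-Z_{u_{k+1}})$ vanish; in an almost \red{} cycle the unique \blue{} edge strictly minimizes $\nu$ among the $W$-differences, while in an almost \blue{} cycle the unique \red{} edge strictly minimizes $\nu$ among the $Z$-differences, so by the ultrametric inequality the corresponding sum would have finite valuation, contradicting that it is zero. Hence $\delta$ has no almost monochromatic cycles.

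The step I expect to be most delicate is surjectivity of $\delta$, i.e.\ choosing $p$ so that both colors occur. The fixed edge is inert: since $W_{\bar u}=0$ and $W_{\bar v}=\lambda(\bar u\bar v)$ are constants, $\nu(W_{\bar v}-W_{\bar u})=0$ at every place, so $\bar u\bar v$ is always \blue{}. To force a \red{} edge I would observe that if every edge-difference $W_u-W_v$ and $Z_u-Z_v$ were constant on $C$, then connectedness of $G$ together with the fixed vertex $\bar u$ would make all coordinates constant and the labeling rigid; hence some edge-difference is nonconstant, and after possibly swapping the roles of $W$ and $Z$ (which swaps the two colors) I may assume it is $W_c-W_d$ with $cd\in E_G$. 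A nonconstant rational function on the complete curve $C$ has a zero at some place $q$, and taking $p=q$ gives $\nu(W_c-W_d)>0$, so $cd$ is \red{} while $\bar u\bar v$ remains \blue{}. The resulting $\delta$ is then surjective and, by the previous paragraph, a NAC-coloring, completing the proof.
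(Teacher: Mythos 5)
Your proposal is correct and follows essentially the same route as the paper: the identical grid construction (red/blue components as coordinates along a fixed and a rotating direction, with positivity and non-equivalence argued the same way) for the NAC-to-flexible direction, and the same $W,Z$-factorization with a valuation on the function field of an irreducible curve of realizations, coloring edges by the sign of the valuation and killing almost-monochromatic cycles via the ultrametric inequality on telescoping sums, for the converse. The only difference is cosmetic: where the paper invokes Chevalley's extension theorem to produce a valuation positive on a transcendental $W_{e'}$ (transcendence being forced by flexibility), you take a zero of a nonconstant edge-difference on the smooth projective model of $C$ (non-constancy being forced by connectedness and the pinned edge) -- an equivalent device for function fields of curves.
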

\begin{proof}
	$\implies$:
		We recall that if $W$ is an additive ordered Abelian group, then a \emph{valuation} of a function field $F$ 
		(a finitely generated extension over $\CC$ of transcendence degree at least one) 
		is a nontrivial surjective mapping $\nu\colon F^* \rightarrow W$ such that for all $a,b\in F^*$:
		\begin{enumerate}
			\item $\nu(ab)=\nu(a)+\nu(b)$, and
			\item if $a+b\in F^*$, then $\nu(a+b)\geq \min(\nu(a),\nu(b))$.
		\end{enumerate}
		A direct consequence of Chevalley's Extension Theorem is that if $z\in F$ is transcendental over $\CC$, 
		then there exists at least one valuation~$\nu$ such that~$\nu(z)>0$ and~$\nu(\CC)=\{0\}$. See for instance \cite{Deuring}.
		
		Let $\lambda\colon E_G\rightarrow \RR_+$ be a flexible labeling of~$G$. 
                Let $X\subset (\RR^2)^{V_G}$ be the set of solutions of~\eqref{eq:mainSystemOfEquations}, i.e.,
                the set of all realizations compatible with~$\lambda$ with one  edge fixed.
                Let $C\subseteq X$ be an irreducible algebraic curve contained in~$X$.
                Let $F$ be the complex function field of $C$, which is defined as the set of all partially defined functions from $C$ to $\CC$ that
                can be locally defined as a quotient of polynomials.
		For every $e=uv\in E_G$, we define $W_{e}, Z_{e} \in F$ by
		\begin{align*}
			W_{e}&=(x_u-x_v) + \ci (y_u-y_v)\,, \\
			Z_{e}&=(x_u-x_v) - \ci (y_u-y_v)\,.
		\end{align*}
		Hence
		\[
		W_e Z_e=(x_u-x_v)^2+(y_u-y_v)^2 = \lambda(uv)^2 \in \RR_+\,.
		\]
		There exists an edge $e' \in E_G$ such that $W_{e'}$ is transcendental over $\CC$. 
		Otherwise, there are only finitely many complex realizations of~$G$ and hence also only finitely many real ones, 
		which would contradict that~$\lambda$ is flexible.
		Let~$\nu$ be a valuation of $F$ such that~$\nu(W_{e'})>0$. For all edges $e\in E_G$, we have $\nu(W_e)+\nu(Z_e)=0$.
		
		We define a NAC-coloring:
		\[
			\delta(e)=\begin{cases}
				\red{} &\text{if } \nu(W_e)>0\,, \\
				\blue{} &\text{if } \nu(W_e)\leq 0\,.
			\end{cases}
		\]
		It is surjective as  $e'$ is \red{} and $\bar{u}\bar{v}$ is \blue{}, since $W_{\bar{u}\bar{v}}\in \CC$. We consider 
		a cycle $(u_1, u_2, \dots, u_n)$. 
		If we assume that all edges $u_1u_2, \dots, u_{n-1}u_n$ are \red{}, then
		\[
			\nu(W_{u_1u_n})=\nu(W_{u_1u_2} +W_{u_2u_3}+ \ldots +W_{u_{n-1}u_n})\geq \min \nu(W_{u_i u_{i+1}})> 0\,.
		\]
		Hence, there is no almost \red{} cycle. On the other hand, if $u_1u_2, \dots, u_{n-1}u_n$ are all \blue{}, then 
		\begin{align*}
			\nu(W_{u_1u_n})&=-\nu(Z_{u_1u_n})=-\nu(Z_{u_1u_2} + \ldots +Z_{u_{n-1}u_n})\\
			&\leq -\min \nu(Z_{u_i u_{i+1}})=\max \nu(W_{u_i u_{i+1}})\leq 0\,,
		\end{align*}
		which implies that~$u_1u_n$ is also blue.
		
	$\impliedby$:
		Let~$\delta$ be a NAC-coloring.
		Let $R_1, \dots, R_m$ be the sets of vertices of connected components of the graph $G^\delta_\red{}$ and
		$B_1, \dots, B_n$ be the sets of vertices of connected components of the graph $G^\delta_\blue{}$.

		For $\alpha\in [0,2\pi)$, we define a map $\rho_\alpha\colon V_G\rightarrow \RR^2$ by 
		\begin{equation*}
		  \rho_\alpha(v)=i\cdot (1,0) + j\cdot (\cos\alpha, \sin\alpha)\,,
		\end{equation*}
		where $i$ and $j$ are such that~$v\in R_i \cap B_j$.
		This means we consider a (slanted) grid of coordinates for the vertices of the graph.
		\begin{center}
			\begin{tikzpicture}[]
				\begin{scope}[xshift=0cm]
					\foreach \j in {0,1,2,3}
					{
						\draw[gridl] ($\j*(0,1)$)
						\foreach \i in {0,1,2,3,4}
						{
							-- +($\i*(1,0)$)
						};
					}
					\foreach \i in {0,1,2,3,4}
					{
						\draw[gridl] ($\i*(1,0)$) node[gridp] {}
						\foreach \j in {0,1,2,3}
						{
							-- +($\j*(0,1)$) node[gridp] {}
						};
					}		  
				\end{scope}
				\begin{scope}[xshift=6cm]
					\foreach \j in {0,1,2,3}
					{
						\draw[gridl] ($\j*(0.5, 0.866025)$)
						\foreach \i in {0,1,2,3,4}
						{
							-- +($\i*(1,0)$)
						};
					}
					\foreach \i in {0,1,2,3,4}
					{
						\draw[gridl] ($\i*(1,0)$) node[gridp] {}
						\foreach \j in {0,1,2,3}
						{
							-- +($\j*(0.5, 0.866025)$) node[gridp] {}
						};
					}
				\end{scope}
			\end{tikzpicture}
		\end{center}

		The map $\rho_\alpha$ gives a labeling $\lambda_\alpha\colon E_G\rightarrow \RR_+$, where
		\begin{equation*}
		  \lambda_\alpha(uv)=\norm{\rho_\alpha(u)-\rho_\alpha(v)}
		\end{equation*}
		for all edges~$uv\in E_G$. We show that~$\lambda_\alpha$ is actually independent on the choice of $\alpha$.
		Let~$uv$ be an edge in~$E_G$ and $i,j,k,l$ be such that~$u\in R_i\cap B_j$ and~$v\in R_k\cap B_l$.
		If~$uv$ is \red{}, then $i=k$ and we have
		\begin{align*}
			\lambda_\alpha(uv)&=\norm{\rho_\alpha(u)-\rho_\alpha(v)}=\norm{(i-k)\cdot (1,0) + (j-l)\cdot (\cos\alpha, \sin\alpha)} \\
											  &=|j-l|\cdot \norm{(\cos\alpha, \sin\alpha)}=|j-l|\,.
		\end{align*}
		If~$uv$ is \blue{}, then $j=l$ and hence $\lambda_\alpha(uv)=|k-i|$. 

		Therefore, $\rho_\alpha$ is a realization of~$G$ compatible with $\lambda=\lambda_\frac{\pi}{2}$ for all~$\alpha\in [0,2\pi)$.
		
		We also need to show that the codomain of~$\lambda$ is $\RR_+$.
		In contrary, assume that $\lambda(uv)= 0$ for some~$uv\in E_G$.
		This implies that both~$u$ and~$v$ belong to the same $R_i\cap B_j$.
		This contradicts that~$\delta$ is a NAC-coloring since if~$uv$ is \red{},
		then there is an almost \blue{} cycle and similarly if~$uv$ is \blue{},
		then there is an almost \red{} cycle.
		
		The labeling~$\lambda$ is flexible since there are infinitely many realizations $\rho_\alpha$ which are non equivalent as~$\delta$ is surjective,
		i.e., there are edges in direction $(0,1)$ and also $(\cos\alpha, \sin\alpha)$.
\qed\end{proof}
We illustrate the proof by examples for both implications.
Example~\ref{ex:quadrilateral} shows how a NAC-coloring is obtained from a flexible labeling of $C_4$.
Notice that a generic quadrilateral is flexible, but there is also a rigid case with non-generic labeling 
-- the lengths of three edges sum up to the length of the fourth one.

\begin{exmp}\label{ex:quadrilateral}
	Let~$G$ be a cycle graph with $V_G=\{v_1,v_2,v_3,v_4\}$ and $E_G=\{v_1v_2,v_2v_3,v_3v_4,v_4v_1\}$, see Figure~\ref{fig:quadrilateral}.
	\begin{figure}[ht]
	  \begin{center}
	    \begin{tikzpicture}[scale=2]
	      \node[vertex] (a) at (0,0) {};
				\node[vertex] (b) at (2,0) {};
				\node[vertex] (c) at (1.8,0.8) {};
				\node[vertex] (d) at (0.2,1) {};
				
				\draw[edge] (a) to node[below] {1} (b);
				\draw[edge] (b) to node[right] {$\lambda_{23}$} (c);
				\draw[edge] (c) to node[above] {$\lambda_{34}$} (d);
				\draw[edge] (d) to node[left] {$\lambda_{14}$} (a);
				
				\node[below left=0.1cm] at (a) {$v_1$};
				\node[below right=0.1cm] at (b) {$v_2$};
				\node[above right=0.1cm] at (c) {$v_3$};
				\node[above left=0.1cm] at (d) {$v_4$};
	    \end{tikzpicture}
	  \end{center}
	  \caption{Quadrilateral}
	  \label{fig:quadrilateral}
	\end{figure}
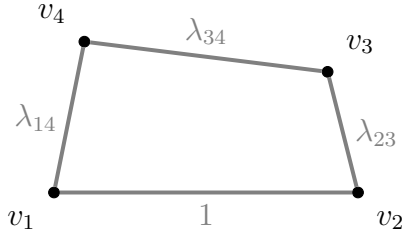

	Let $v_1v_2$ be the fixed edge. Let~$\lambda$ be a flexible labeling given by:
	\begin{align*}
		\lambda(v_1v_2)&= 1\,, & \lambda(v_2v_3)&= \lambda_{23}\,, &\lambda(v_3v_4)&= \lambda_{34}\,, & \lambda(v_4v_1)&= \lambda_{41}\,.
	\end{align*}
	The coordinates of~$v_i$ are denoted by $(x_i,y_i)$, so the system \eqref{eq:mainSystemOfEquations} has the form
	\begin{align*}
		&\begin{aligned}
			x_{1}=0, \quad	y_{1}&=0\,, \nonumber \\
			x_{2}=1, \quad y_{2}&=0\,,
		\end{aligned}
		& &\begin{aligned}
			{\left(x_{3} - 1\right)}^{2} + y_{3}^{2} &= \lambda_{23}^{2}\,, \\
			{\left(x_{3} - x_{4}\right)}^{2} + {\left(y_{3} - y_{4}\right)}^{2} &= \lambda_{34}^{2}\,, \\
			x_{4}^{2} + y_{4}^{2} &= \lambda_{41}^{2}\,. 
		\end{aligned}
	\end{align*}
	For generic~$\lambda_{23},\lambda_{34}$ and~$\lambda_{41}$, the solution set $X$ is an irreducible curve.
	Let $F$ be its complex function field. We have the following equations for $W_e,Z_e \in F$:
	\begin{align*}
		W_{12}= Z_{12} &=1\,, & W_{23} Z_{23} &= \lambda_{23}^{2}\,, & W_{34} Z_{34} &= \lambda_{34}^{2}\,, & W_{41} Z_{41} &= \lambda_{41}^{2}\,.
	\end{align*}
	Since~$G$ is a cycle, we also have
	\begin{align*}
		W_{23} + W_{34} + W_{41} + 1 &= 0 \,, & Z_{23} + Z_{34} + Z_{41} + 1 &= 0 \,.
	\end{align*}
	We choose the edge $e'$ to be~$v_1v_4$. Let~$\nu$ be a valuation such that~$\nu(W_{41})=1$ and~$\nu(c)=0$ for all $c\in\CC$.
	In order to compute valuations $W_{23}$ and $W_{34}$, we express $W_{34}$ in terms of the parameter~$u=W_{41}$, i.e., we solve the system
	\begin{align*}
		 \left(u + W_{34} + 1\right) \left(\frac{\lambda_{41}^{2}}{u} + Z_{34} + 1\right)&= \lambda_{23}^{2}\,,  & 	W_{34} Z_{34} &= \lambda_{34}^{2} \,.
	\end{align*}
	The solutions are
	\begin{equation*}
		W_{34}=\frac{-\lambda_{41}^{2} + (\lambda_{23}^{2} - \lambda_{34}^{2} - \lambda_{41}^{2} - 1) u - u^{2} \pm S}{2  (\lambda_{41}^{2} + u)}\,,
	\end{equation*}
	where
	\begin{equation*}
		S=\sqrt{
		\begin{aligned}
			&\lambda_{41}^{4}  + 2  (\lambda_{41}^{4} - (\lambda_{23}^{2} + \lambda_{34}^{2} - 1) \lambda_{41}^{2}) u \\
			&+((\lambda_{23}^{2}-1)^2 + \lambda_{34}^{4} + \lambda_{41}^{4} - 2  (\lambda_{23}^{2} + 1) \lambda_{34}^{2} - 2  (\lambda_{23}^{2} + \lambda_{34}^{2} - 2) \lambda_{41}^{2}) u^{2}\\
			 &- 2  (\lambda_{23}^{2} + \lambda_{34}^{2} - \lambda_{41}^{2} - 1) u^{3} + u^{4} 			
		\end{aligned}}.
	\end{equation*}
	The square root can be written as a power series by factoring out~$\lambda_{41}^{4}$ and 
	using the Taylor expansion of $\sqrt{1+t}=1 + \frac{t}{2}-\frac{t^2}{8}+O(t^3)$.
	Since the denominator of the solutions is a power series of order 0, it is invertible and hence the solutions are again power series, namely
	\begin{align*}
		W_{34}^1&=-1 + \frac{\lambda_{23}^{2} - \lambda_{41}^{2}}{\lambda_{41}^{2}}u + O(u^{2}) & &\text{and} 
		&	W_{34}^2&=-\frac{\lambda_{34}^{2}}{\lambda_{41}^{2}} u + O(u^{2})\,.
	\end{align*}
	So for the first solution, we have a NAC-coloring~$\delta_1$ given by:
	\begin{align*}
		\delta_1(v_1v_2)&= \blue{} &\impliedby \nu(W_{12})&= 0\,, &\delta_1(v_2v_3)&= \red{}\,,  \\
		\delta_1(v_3v_4)&= \blue{} &\impliedby \nu(W_{34}^1)&=0\,, & \delta_1(v_1v_4)&= \red{} &\impliedby \nu(W_{41})&=1\,.
	\end{align*}
	We remark that it is not necessary to compute $W_{23}$ precisely as we know that there are no almost \blue{} cycles, hence the edge~$v_2v_3$ must be \red{}.	
	Similarly, the second solution gives a NAC-coloring~$\delta_2$:
	\begin{align*}
		\delta_2(v_1v_2)&= \blue{} &\impliedby \nu(W_{12})&= 0\,, &	\delta_2(v_2v_3)&= \blue{}\,, \\
		\delta_2(v_3v_4)&= \red{} &\impliedby \nu(W_{34}^2)&=1\,,   &\delta_2(v_1v_4)&= \red{} &\impliedby \nu(W_{41})&=1\,.
	\end{align*}
	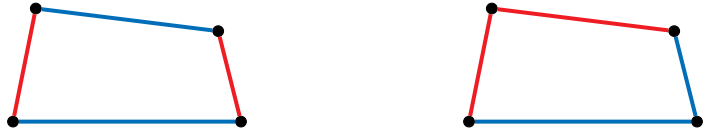
\begin{figure}[ht]
	  \begin{center}
	    \begin{tikzpicture}[scale=1.5]
	      \node[vertex] (a) at (0,0) {};
				\node[vertex] (b) at (2,0) {};
				\node[vertex] (c) at (1.8,0.8) {};
				\node[vertex] (d) at (0.2,1) {};
				
				\draw[bedge] (a) to (b);
				\draw[redge] (b) to (c);
				\draw[bedge] (c) to (d);
				\draw[redge] (d) to (a);
	    
				\begin{scope}[xshift=4cm]
					\node[vertex] (a) at (0,0) {};
					\node[vertex] (b) at (2,0) {};
					\node[vertex] (c) at (1.8,0.8) {};
					\node[vertex] (d) at (0.2,1) {};
					
					\draw[bedge] (a) to (b);
					\draw[bedge] (b) to (c);
					\draw[redge] (c) to (d);
					\draw[redge] (d) to (a);
				\end{scope}
	    \end{tikzpicture}
	  \end{center}
	  \caption{Quadrilateral with NAC-colorings}
	  \label{fig:quadrilateral2}
	\end{figure}
\end{exmp}

The following example illustrates the opposite implication of the proof of Theorem~\ref{thm:nacflexible}, i.e., how a flexible labeling is constructed from a NAC-coloring.
\begin{exmp}\label{ex:RotGraph}
	In Figure~\ref{fig:RotGraphCol}, a graph with its NAC-colorings and realizations compatible with the constructed flexible labelings are shown.
	Note that for the first two colorings we have overlapping vertices in the realizations.
	For the last coloring this does not happen. However, there are some non-visible edges.
	A generalization of $\rho_\alpha$ can prevent from this (see below).
	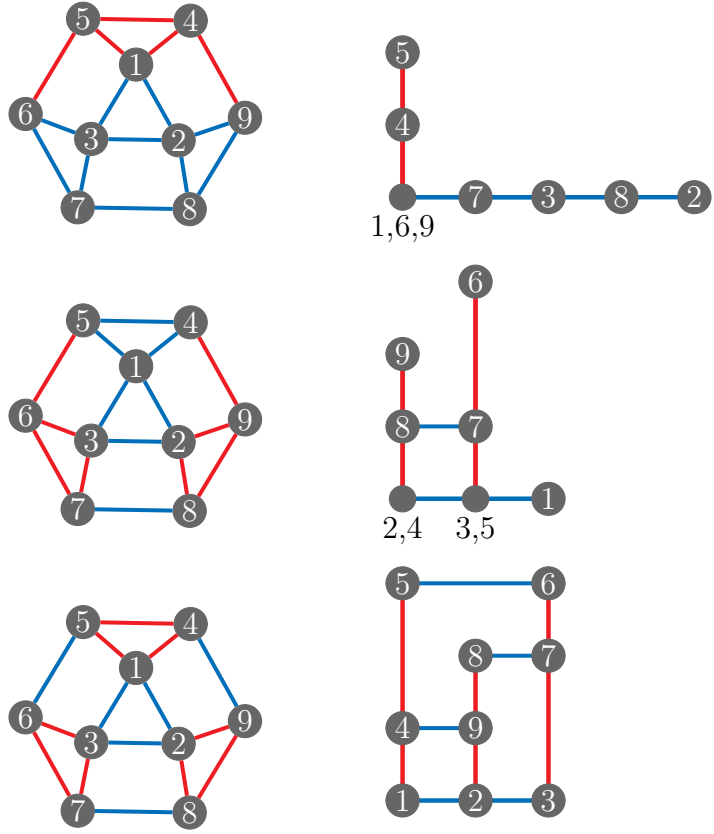
\begin{figure}[ht]
	\begin{center}
		\begin{tikzpicture}[scale=0.8]
			\begin{scope}
				\begin{scope}[scale=1.5]
					\node[lnode] (1) at (1.21142, 1.59773) {1};
					\node[lnode] (2) at (1.67716, 0.760731) {2};
					\node[lnode] (3) at (0.719916, 0.776553) {3};
					\node[lnode] (4) at (1.80994, 2.08308) {4};
					\node[lnode] (5) at (0.630919, 2.10474) {5};
					\node[lnode] (6) at (0., 1.05168) {6};
					\node[lnode] (7) at (0.571291, 0.0205644) {7};
					\node[lnode] (8) at (1.79883, 0.) {8};
					\node[lnode] (9) at (2.40542, 1.01018) {9};
					\draw[redge] (1)edge(4)  (5)edge(1) (4)edge(5) (5)edge(6) (9)edge(4);
					\draw[bedge] (1)edge(2) (2)edge(3) (3)edge(1)  (3)edge(6) (6)edge(7) (7)edge(3) (2)edge(8) (8)edge(9)	(9)edge(2)   (7)edge(8) ;
				\end{scope}
				\begin{scope}[xshift=5cm,yshift=-1cm,scale=1.2]
					\node[lnode] (1) at (1, 1) { };
					\node[lnode] (2) at (5, 1) { };
					\node[lnode] (3) at (3, 1) { };
					\node[lnode] (4) at (1, 2) { };
					\node[lnode] (5) at (1, 3) { };
					\node[lnode] (6) at (1, 1) { };
					\node[lnode] (7) at (2, 1) { };
					\node[lnode] (8) at (4, 1) { };
					\node[lnode] (9) at (1, 1) { };
					\draw[redge] (1)edge(4)  (5)edge(1) (4)edge(5) (5)edge(6) (9)edge(4);
					\draw[bedge] (1)edge(2) (2)edge(3) (3)edge(1)  (3)edge(6) (6)edge(7) (7)edge(3) (2)edge(8) (8)edge(9)	(9)edge(2)   (7)edge(8) ;
					\foreach \x in {2,3,4,5,7,8} \node[lnode] at (\x) {\x};
					\node[lnode] at (1) {\vphantom{9}};
					\node[below=3pt] at (1) {1,6,9};
				\end{scope}
			\end{scope}
			\begin{scope}[yshift=-5cm]
				\begin{scope}[scale=1.5]
					\node[lnode] (1) at (1.21142, 1.59773) {1};
					\node[lnode] (2) at (1.67716, 0.760731) {2};
					\node[lnode] (3) at (0.719916, 0.776553) {3};
					\node[lnode] (4) at (1.80994, 2.08308) {4};
					\node[lnode] (5) at (0.630919, 2.10474) {5};
					\node[lnode] (6) at (0., 1.05168) {6};
					\node[lnode] (7) at (0.571291, 0.0205644) {7};
					\node[lnode] (8) at (1.79883, 0.) {8};
					\node[lnode] (9) at (2.40542, 1.01018) {9};
					\draw[redge] (3)edge(6) (6)edge(7) (7)edge(3) (2)edge(8) (8)edge(9)	(9)edge(2) (5)edge(6) (9)edge(4);
					\draw[bedge] (1)edge(2) (2)edge(3) (3)edge(1) (1)edge(4) (4)edge(5) (5)edge(1) (7)edge(8);
				\end{scope}
				\begin{scope}[xshift=5cm,yshift=-1cm,scale=1.2]
					\node[lnode] (1) at (3, 1) {};
					\node[lnode] (2) at (1, 1) {};
					\node[lnode] (3) at (2, 1) {};
					\node[lnode] (4) at (1, 1) {};
					\node[lnode] (5) at (2, 1) {};
					\node[lnode] (6) at (2, 4) {};
					\node[lnode] (7) at (2, 2) {};
					\node[lnode] (8) at (1, 2) {};
					\node[lnode] (9) at (1, 3) {};
					\draw[redge] (3)edge(6) (6)edge(7) (7)edge(3) (2)edge(8) (8)edge(9)	(9)edge(2) (5)edge(6) (9)edge(4);
					\draw[bedge] (1)edge(2) (2)edge(3) (3)edge(1) (1)edge(4) (4)edge(5) (5)edge(1) (7)edge(8);
					\foreach \x in {1,6,7,8,9} \node[lnode] at (\x) {\x};
					\node[lnode] at (2) {\vphantom{2}};
					\node[below=3pt] at (2) {2,4};
					\node[lnode] at (3) {\vphantom{3}};
					\node[below=3pt] at (3) {3,5};
				\end{scope}
			\end{scope}			
			\begin{scope}[yshift=-10cm]
				\begin{scope}[scale=1.5]
					\node[lnode] (1) at (1.21142, 1.59773) {1};
					\node[lnode] (2) at (1.67716, 0.760731) {2};
					\node[lnode] (3) at (0.719916, 0.776553) {3};
					\node[lnode] (4) at (1.80994, 2.08308) {4};
					\node[lnode] (5) at (0.630919, 2.10474) {5};
					\node[lnode] (6) at (0., 1.05168) {6};
					\node[lnode] (7) at (0.571291, 0.0205644) {7};
					\node[lnode] (8) at (1.79883, 0.) {8};
					\node[lnode] (9) at (2.40542, 1.01018) {9};
					\draw[redge] (1)edge(4) (4)edge(5) (5)edge(1) (3)edge(6) (6)edge(7) (7)edge(3) (2)edge(8) (8)edge(9)	(9)edge(2);
					\draw[bedge] (1)edge(2) (2)edge(3) (3)edge(1) (9)edge(4) (5)edge(6) (7)edge(8);
				\end{scope}
				\begin{scope}[xshift=5cm,yshift=-1cm,scale=1.2]
					\node[lnode] (1) at (1, 1) {};
					\node[lnode] (2) at (2, 1) {};
					\node[lnode] (3) at (3, 1) {};
					\node[lnode] (4) at (1, 2) {};
					\node[lnode] (5) at (1, 4) {};
					\node[lnode] (6) at (3, 4) {};
					\node[lnode] (7) at (3, 3) {};
					\node[lnode] (8) at (2, 3) {};
					\node[lnode] (9) at (2, 2) {};
					\draw[redge] (1)edge(4) (4)edge(5) (5)edge(1) (3)edge(6) (6)edge(7) (7)edge(3) (2)edge(8) (8)edge(9)	(9)edge(2);
					\draw[bedge] (1)edge(2) (2)edge(3) (3)edge(1) (9)edge(4) (5)edge(6) (7)edge(8);
					\foreach \x in {1,...,9} \node[lnode] at (\x) {\x};
				\end{scope}
			\end{scope}
		\end{tikzpicture}
	\end{center}
	\caption{For each NAC-coloring (left side), the proof of the implication $\impliedby$ of Theorem~\ref{thm:nacflexible} 
                 gives a flexible labeling together with a curve of realizations. These realizations (right side) are often
                 ``degenerate'' in the sense that vertex points may coincide or that edge line segments may overlap.}
	\label{fig:RotGraphCol}
	\end{figure}
\end{exmp}

In order to have more flexibility in the choice of lengths of a flexible labeling,
we modify the ``$\impliedby$'' part of the proof of Theorem~\ref{thm:nacflexible} by using a ``zigzag'' grid instead of the normal one.
\begin{center}
	\begin{tikzpicture}[scale=0.9]
		\begin{scope}[xshift=0cm]
			\foreach \a in  {(0,0),(0.2,1.1),(-0.1,1.9),(0.2,3)}
			{
				\draw[gridl] \a
				\foreach \b in {(0,0),(1,0.2),(2.1,-0.1),(3.3,0.1),(3.9,-0.2)}
				{
					-- +\b
				};
			}
			\foreach \b in {(0,0),(1,0.2),(2.1,-0.1),(3.3,0.1),(3.9,-0.2)}
			{
				\draw[gridl] \b node[gridp] {}
				\foreach \a in {(0,0),(0.2,1.1),(-0.1,1.9),(0.2,3)}
				{
					-- +\a node[gridp] {}
				};
			}
		\end{scope}
		\begin{scope}[xshift=6cm]
			\foreach \x/\y in  {0/0,0.2/1.1,-0.1/1.9,0.2/3}
			{
				\begin{scope}[rotate around={-20:(0,0)}]
					\coordinate (ar) at (\x,\y);
				\end{scope}				
				\draw[gridl] (ar)
				\foreach \b in {(0,0),(1,0.2),(2.1,-0.1),(3.3,0.1),(3.9,-0.2)}
				{
					-- +\b
				};
			}
			\foreach \b in {(0,0),(1,0.2),(2.1,-0.1),(3.3,0.1),(3.9,-0.2)}
			{
				\draw[gridl,rotate around={-20:\b}] \b node[gridp] {}
				\foreach \a in {(0,0),(0.2,1.1),(-0.1,1.9),(0.2,3)}
				{
					-- +\a node[gridp] {}
				};
			}
		\end{scope}
	\end{tikzpicture}
\end{center}
Let $R_1, \dots, R_m$ and $B_1, \dots, B_n$ be as before.
Let $a_1, \dots, a_n$ and $b_1, \dots , b_m$ be pairwise distinct vectors in $\RR^2$.
For $\alpha\in [0,2\pi)$, we define a map $\rho'_\alpha\colon V_G\rightarrow \RR^2$ by 
\begin{equation*}
  \rho'_\alpha(v)=
  \begin{pmatrix}
		\cos \alpha & \sin\alpha \\
		-\sin\alpha & \cos \alpha
	\end{pmatrix}%
	\cdot a_j +b_i \,,
\end{equation*}
where $i$ and $j$ are such that~$v\in R_i \cap B_j$.
The map $\rho'_\alpha$ gives a labeling $\lambda'_\alpha\colon E_G\rightarrow \RR_+$, where
\begin{equation*}
  \lambda'_\alpha(uv)=\norm{\rho'_\alpha(u)-\rho'_\alpha(v)}=\norm{%
  \begin{pmatrix}
		\cos \alpha & \sin\alpha \\
		-\sin\alpha & \cos \alpha
	\end{pmatrix}%
	\cdot(a_j-a_l) + (b_i-b_k)}
\end{equation*}
for all edges~$uv\in E_G$ and $i,j,k,l$ such that~$u\in R_i\cap B_j$ and~$v\in R_k\cap B_l$. 
If~$uv$ is \red{}, then $i=k$ and we have 
\begin{equation*}
  \lambda'_\alpha(uv)=\norm{%
  \begin{pmatrix}
		\cos \alpha & \sin\alpha \\
		-\sin\alpha & \cos \alpha
	\end{pmatrix}%
	\cdot(a_j-a_l)}=\norm{a_j-a_l}.
\end{equation*}
If~$uv$ is \blue{}, then $j=l$ and hence
\begin{equation*}
  \lambda'_\alpha(uv)=\norm{(b_i-b_k)}.
\end{equation*}
Therefore,~$\lambda'_\alpha$ is independent on $\alpha$, let $\lambda'=\lambda'_\frac{\pi}{2}$.
Moreover, the codomain of~$\lambda'$ is indeed $\RR_+$: if $\lambda'(uv)=0$ for some edge~$uv\in E_G$,
then both~$u$ and~$v$ belong to the same $R_i\cap B_j$, which leads to a contradiction in the same way as in the previous proof.
To conclude, the labeling~$\lambda'$ is flexible since there are infinitely many realizations $\rho'_\alpha$
which are non equivalent as~$\delta$ is surjective.

We apply this construction to the previous example.
\begin{exmp}
	We consider the last coloring from Example~\ref{ex:RotGraph}; we keep the order of equivalence classes and fix
	\begin{align*}
	  a_1 &= (0, 0)\,, & 
	  a_2 &= \left(-\frac{3 \sqrt{3}}{8},-\frac{3}{8}\right), &
	  a_3 &= \left(0,\frac{3}{4}\right), &
	  a_4 &= \left(\frac{3 \sqrt{3}}{8},-\frac{3}{8}\right),\\
	  b_1 &= (0,0)\,, &
	  b_2 &= \left(-\frac{1}{2},\frac{\sqrt{3}}{2}\right), &
	  b_3 &= \left(\frac{1}{2},\frac{\sqrt{3}}{2}\right).
	\end{align*}
	Then we get the following flexible realization.
	The flexibility comes from rotating the outer triangles around the vertices of the inner triangle respectively.
	\begin{center}
		\begin{tikzpicture}[scale=1.5]
			\node[lnode] (1) at (0., 0.) {1};
			\node[lnode] (2) at (-0.5, 0.866025) {2};
			\node[lnode] (3) at (0.5, 0.866025) {3};
			\node[lnode] (4) at (-0.649519, -0.375) {4};
			\node[lnode] (5) at (0.649519, -0.375) {5};
			\node[lnode] (6) at (1.14952, 0.491025) {6};
			\node[lnode] (7) at (0.5, 1.61603) {7};
			\node[lnode] (8) at (-0.5, 1.61603) {8};
			\node[lnode] (9) at (-1.14952, 0.491025) {9};
			\draw[redge] (1)edge(4) (4)edge(5) (5)edge(1) (3)edge(6) (6)edge(7) (7)edge(3) (2)edge(8) (8)edge(9)	(9)edge(2);
			\draw[bedge] (1)edge(2) (2)edge(3) (3)edge(1) (9)edge(4) (5)edge(6) (7)edge(8);
		\end{tikzpicture}
	\end{center}
	\begin{center}
	  \begin{tikzpicture}[scale=0.57]
			\begin{scope}[]
				\node[cnode] (1) at (0., 0.) {};
				\node[cnode] (2) at (-0.5, 0.866025) {};
				\node[cnode] (3) at (0.5, 0.866025) {};
				\node[cnode] (4) at (-0.649519, -0.375) {};
				\node[cnode] (5) at (0.649519, -0.375) {};
				\node[cnode] (6) at (1.14952, 0.491025) {};
				\node[cnode] (7) at (0.5, 1.61603) {};
				\node[cnode] (8) at (-0.5, 1.61603) {};
				\node[cnode] (9) at (-1.14952, 0.491025) {};
				\draw[redge] (1)edge(4) (4)edge(5) (5)edge(1) (3)edge(6) (6)edge(7) (7)edge(3) (2)edge(8) (8)edge(9)	(9)edge(2);
				\draw[bedge] (1)edge(2) (2)edge(3) (3)edge(1) (9)edge(4) (5)edge(6) (7)edge(8);
			\end{scope}
			\begin{scope}[xshift=3cm]
				\node[cnode] (1) at (0., 0.) {};
				\node[cnode] (2) at (-0.5, 0.866025) {};
				\node[cnode] (3) at (0.5, 0.866025) {};
				\node[cnode] (4) at (-0.724444, -0.194114) {};
				\node[cnode] (5) at (0.53033, -0.53033) {};
				\node[cnode] (6) at (1.03033, 0.335695) {};
				\node[cnode] (7) at (0.694114, 1.59047) {};
				\node[cnode] (8) at (-0.305886, 1.59047) {};
				\node[cnode] (9) at (-1.22444, 0.671911) {};
				\draw[redge] (1)edge(4) (4)edge(5) (5)edge(1) (3)edge(6) (6)edge(7) (7)edge(3) (2)edge(8) (8)edge(9)	(9)edge(2);
				\draw[bedge] (1)edge(2) (2)edge(3) (3)edge(1) (9)edge(4) (5)edge(6) (7)edge(8);
			\end{scope}
			\begin{scope}[xshift=6cm]
				\node[cnode] (1) at (0., 0.) {};
				\node[cnode] (2) at (-0.5, 0.866025) {};
				\node[cnode] (3) at (0.5, 0.866025) {};
				\node[cnode] (4) at (-0.75, 0.) {};
				\node[cnode] (5) at (0.375, -0.649519) {};
				\node[cnode] (6) at (0.875, 0.216506) {};
				\node[cnode] (7) at (0.875, 1.51554) {};
				\node[cnode] (8) at (-0.125, 1.51554) {};
				\node[cnode] (9) at (-1.25, 0.866025) {};
				\draw[redge] (1)edge(4) (4)edge(5) (5)edge(1) (3)edge(6) (6)edge(7) (7)edge(3) (2)edge(8) (8)edge(9)	(9)edge(2);
				\draw[bedge] (1)edge(2) (2)edge(3) (3)edge(1) (9)edge(4) (5)edge(6) (7)edge(8);
			\end{scope}
			\begin{scope}[xshift=9cm]
				\node[cnode] (1) at (0., 0.) {};
				\node[cnode] (2) at (-0.5, 0.866025) {};
				\node[cnode] (3) at (0.5, 0.866025) {};
				\node[cnode] (4) at (-0.724444, 0.194114) {};
				\node[cnode] (5) at (0.194114, -0.724444) {};
				\node[cnode] (6) at (0.694114, 0.141581) {};
				\node[cnode] (7) at (1.03033, 1.39636) {};
				\node[cnode] (8) at (0.0303301, 1.39636) {};
				\node[cnode] (9) at (-1.22444, 1.06014) {};
				\draw[redge] (1)edge(4) (4)edge(5) (5)edge(1) (3)edge(6) (6)edge(7) (7)edge(3) (2)edge(8) (8)edge(9)	(9)edge(2);
				\draw[bedge] (1)edge(2) (2)edge(3) (3)edge(1) (9)edge(4) (5)edge(6) (7)edge(8);
			\end{scope}
			\begin{scope}[xshift=12cm]
				\node[cnode] (1) at (0., 0.) {};
				\node[cnode] (2) at (-0.5, 0.866025) {};
				\node[cnode] (3) at (0.5, 0.866025) {};
				\node[cnode] (4) at (-0.649519, 0.375) {};
				\node[cnode] (5) at (0., -0.75) {};
				\node[cnode] (6) at (0.5, 0.116025) {};
				\node[cnode] (7) at (1.14952, 1.24103) {};
				\node[cnode] (8) at (0.149519, 1.24103) {};
				\node[cnode] (9) at (-1.14952, 1.24103) {};
				\draw[redge] (1)edge(4) (4)edge(5) (5)edge(1) (3)edge(6) (6)edge(7) (7)edge(3) (2)edge(8) (8)edge(9)	(9)edge(2);
				\draw[bedge] (1)edge(2) (2)edge(3) (3)edge(1) (9)edge(4) (5)edge(6) (7)edge(8);
			\end{scope}
			\begin{scope}[xshift=15cm]
				\node[cnode] (1) at (0., 0.) {};
				\node[cnode] (2) at (-0.5, 0.866025) {};
				\node[cnode] (3) at (0.5, 0.866025) {};
				\node[cnode] (4) at (-0.53033, 0.53033) {};
				\node[cnode] (5) at (-0.194114, -0.724444) {};
				\node[cnode] (6) at (0.305886, 0.141581) {};
				\node[cnode] (7) at (1.22444, 1.06014) {};
				\node[cnode] (8) at (0.224444, 1.06014) {};
				\node[cnode] (9) at (-1.03033, 1.39636) {};
				\draw[redge] (1)edge(4) (4)edge(5) (5)edge(1) (3)edge(6) (6)edge(7) (7)edge(3) (2)edge(8) (8)edge(9)	(9)edge(2);
				\draw[bedge] (1)edge(2) (2)edge(3) (3)edge(1) (9)edge(4) (5)edge(6) (7)edge(8);
			\end{scope}
			\begin{scope}[xshift=18cm]
				\node[cnode] (1) at (0., 0.) {};
				\node[cnode] (2) at (-0.5, 0.866025) {};
				\node[cnode] (3) at (0.5, 0.866025) {};
				\node[cnode] (4) at (-0.375, 0.649519) {};
				\node[cnode] (5) at (-0.375, -0.649519) {};
				\node[cnode] (6) at (0.125, 0.216506) {};
				\node[cnode] (7) at (1.25, 0.866025) {};
				\node[cnode] (8) at (0.25, 0.866025) {};
				\node[cnode] (9) at (-0.875, 1.51554) {};
				\draw[redge] (1)edge(4) (4)edge(5) (5)edge(1) (3)edge(6) (6)edge(7) (7)edge(3) (2)edge(8) (8)edge(9)	(9)edge(2);
				\draw[bedge] (1)edge(2) (2)edge(3) (3)edge(1) (9)edge(4) (5)edge(6) (7)edge(8);
			\end{scope}
	  \end{tikzpicture}
	\end{center}
\end{exmp}

To conclude this section, we discuss whether all NAC-colorings of a graph are obtained
from some curve of realizations compatible with a flexible labeling, 
or if all flexible labelings of a graph can be expressed in terms of ``zigzag'' grid construction.

We recall that all NAC-colorings of a quadrilateral in which the edge~$v_1v_2$ is \blue{} were obtained in Example~\ref{ex:quadrilateral}.
In general, a single flexible labeling does not give all possible NAC-colorings. But every NAC-coloring comes from some flexible labeling.
Namely, we show that if we start with a NAC-coloring~$\delta$ of~$G$ and apply the construction $\impliedby$ to get a flexible labeling $\lambda$,
then the construction $\implies$ applied on~$\lambda$ gives back the NAC-coloring~$\delta$.

Let $\lambda$ be a labeling, $\rho_\alpha$ some realizations and we use notation as in the $\impliedby$ part of the proof of Theorem~\ref{thm:nacflexible}.
The realizations $\rho_\alpha$ for $\alpha\in [0,2\pi)$ give a parametrization of an irreducible curve $C$ of realizations compatible with~$\lambda$.
If~$uv\in E_G$ is an edge such that $\delta(uv)=\red{}$, then there are indices $j,k,l\in \NN$ such that~$u\in R_j \cap B_k$ and~$v\in R_j \cap B_{k+l}$.
Hence
\begin{align*}
	W_{uv}&=j+k\cos\alpha - (j+(k+l)\cos\alpha) + \ci(k\sin\alpha-(k+l)\sin\alpha )\\
	&=l(\cos\alpha +\ci \sin\alpha)\,.
\end{align*}
Therefore, $z= \cos\alpha +\ci \sin\alpha$ is a transcendental element of the complex function field of $C$
and there is a valuation~$\nu$ such that~$\nu(z)=1$ and $\nu(\CC)=\{0\}$. Thus, $\nu(W_e)=1$ for all $e\in E_G$ such that $\delta(e)=\red{}$.
On the other hand, if~$u'v'\in E_G$ is such that $\delta(u'v')=\blue{}$,
that means~$u'\in R_{j'} \cap B_{k'}$ and~$v'\in R_{j'+l'} \cap B_{k'}$ for some $j',k',l'\in \NN$, then
\begin{align*}
	W_{u'v'}&=j'+k'\cos\alpha - ((j'+l')+ k'\cos\alpha) + \ci(k'\sin\alpha-k'\sin\alpha)=-l'\,.
\end{align*}
Therefore, $\nu(W_{e'})=0$ for all $e'\in E_G$ such that $\delta(e')=\blue{}$.
We showed that the construction from a flexible labeling~$\lambda$ gives the coloring~$\delta$,
since the valuation of \red{} edges is positive and the valuation of $\blue{}$ edges is zero.

While every NAC-coloring comes from a flexible labeling, there are flexible labelings which cannot be obtained by a ``zigzag'' grid.
This can be seen already on the quadrilateral -- the described construction always assigns same lengths to the opposite edges, 
but there are also flexible labelings with different lengths.
Another example is~$K_{2,3}$. A generic labeling of~$K_{2,3}$ is flexible and the realizations are injective,
but at least two vertices coincide in the grid construction from any possible NAC-coloring (see Figure~\ref{fig:K23}).
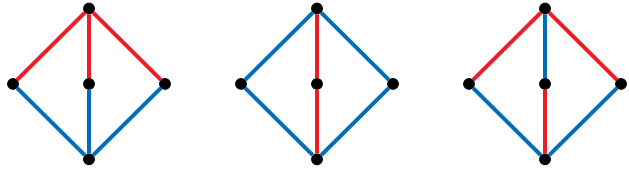
\begin{figure}[ht]
  \begin{center}
    \begin{tikzpicture}
      \begin{scope}
        \node[vertex] (a) at (0,0) {};
				\node[vertex] (b) at (0,1) {};
				\node[vertex] (c) at (0,-1) {};
				\node[vertex] (d) at (1,0) {};
				\node[vertex] (e) at (-1,0) {};
				
				\draw[redge] (a)edge(b) (b)edge(d) (b)edge(e);
				\draw[bedge] (a)edge(c)  (c)edge(d) (c)edge(e);
      \end{scope}
      \begin{scope}[xshift=3cm]
        \node[vertex] (a) at (0,0) {};
				\node[vertex] (b) at (0,1) {};
				\node[vertex] (c) at (0,-1) {};
				\node[vertex] (d) at (1,0) {};
				\node[vertex] (e) at (-1,0) {};
				
				\draw[redge] (a)edge(b) (a)edge(c);
				\draw[bedge] (b)edge(d) (b)edge(e) (c)edge(d) (c)edge(e);
      \end{scope}
      \begin{scope}[xshift=6cm]
        \node[vertex] (a) at (0,0) {};
				\node[vertex] (b) at (0,1) {};
				\node[vertex] (c) at (0,-1) {};
				\node[vertex] (d) at (1,0) {};
				\node[vertex] (e) at (-1,0) {};
				
				\draw[redge] (a)edge(c) (b)edge(d) (b)edge(e);
				\draw[bedge] (a)edge(b)  (c)edge(d) (c)edge(e);
      \end{scope}
    \end{tikzpicture}
  \end{center}
  \caption{NAC-colorings of $K_{2,3}$}
  \label{fig:K23}
\end{figure}
On the other hand, if there is a NAC-coloring such that $|B_i\cap R_j|\leq 1$ for all $i,j$,
then there is a curve of injective realizations compatible with the obtained flexible labeling.

\section{Classes of Graphs with and without Flexible Labelings} \label{sec:examples}

In this section, we give two sufficient conditions for the non-existence of a flexible labeling of a given graph, as well
as three sufficient conditions for the existence of a flexible labeling of a given graph.
We also give a sharp upper bound for the number of edges of a graph on $n$ vertices with a flexible labeling.

The triangle graph $C_3$ is always rigid and it is natural to expect that subgraphs which ``consist of triangles'' are also rigid.
The notion of \trcon{}-ness makes this intuition precise.
\begin{defn}
	Let~$G$ be a graph.
	Let $\trequiv'$ be a relation on $E_G\times E_G$ such that $e_1 \trequiv' e_2$
	iff there exists a triangle subgraph $C_3$ of~$G$ such that $e_1, e_2\in E_{C_3}$.
	Let $\trequiv$ be the reflexive-transitive closure of $\trequiv'$.
	The graph~$G$ is called \emph{\trcon{}} if $e_1\trequiv e_2$ for all $e_1,e_2\in E_G$.
	An edge~$e\in E_G$ is called a \emph{connecting edge} if it belongs to no triangle subgraph.
\end{defn}
Note that the graph consisting of a single edge is therefore \trcon{}.
Now we can justify the fact that \trcon{} subgraphs of a graph are indeed rigid, since their edges are colored by the same color.
\begin{lem}	\label{lem:coloringOfEquivalentEdges}
	Let~$\delta$ be a coloring of a graph~$G$ such that there are no almost \red{} neither almost \blue{} cycles.
	If $H$ is a \trcon{} subgraph of~$G$, then $\delta(e)=\delta(e')$ for all $e,e'\in E_H$.
\end{lem}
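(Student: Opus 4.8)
The plan is to reduce the statement to a single observation about triangles and then propagate it through the transitive closure that defines $\trequiv$. First I would handle the base case: any two edges lying in a common triangle receive the same color. Let $C_3$ be a triangle subgraph of~$G$ with edge set $\{e_1,e_2,e_3\}$. Since there are three edges and only two colors, by the pigeonhole principle at least two of them share a color; hence if the three edges were not all of the same color, then exactly one edge would carry the minority color, making $C_3$ either an almost \red{} cycle or an almost \blue{} cycle. This contradicts the hypothesis on~$\delta$, so the three edges of~$C_3$ are monochromatic. In particular, $e_1\trequiv' e_2$ implies $\delta(e_1)=\delta(e_2)$.

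Next I would observe that the relation ``$\delta(e)=\delta(e')$'' is an equivalence relation on~$E_G$: it is reflexive, symmetric, and transitive. The base case shows that $\trequiv'$ is contained in this ``equal color'' relation. Since the reflexive-transitive closure $\trequiv$ of $\trequiv'$ is by definition the smallest reflexive and transitive relation containing $\trequiv'$, and ``equal color'' is already reflexive and transitive, it follows that $e\trequiv e'$ implies $\delta(e)=\delta(e')$ for all $e,e'\in E_G$.

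Finally, since $H$ is \trcon{}, every pair of edges $e,e'\in E_H$ satisfies $e\trequiv e'$, where the witnessing triangles are subgraphs of~$H$ and hence also of~$G$, so the base case applies to each of them. Therefore $\delta(e)=\delta(e')$ for all $e,e'\in E_H$, which is exactly the assertion. Note that only the no-almost-cycle condition is used; surjectivity of~$\delta$ plays no role here.

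The argument is essentially self-contained, so there is no serious obstacle. The only point requiring a little care is the transitivity step: rather than chaining triangles together in an ad hoc way, one should exploit that ``having the same color'' is itself an equivalence relation, so that it automatically absorbs the full closure $\trequiv$ generated from the single-triangle relation $\trequiv'$.
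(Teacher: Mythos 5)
Your proof is correct and follows essentially the same approach as the paper, which compresses the whole argument into one sentence (monochromatic triangles plus the reflexive-transitive closure); your write-up simply makes the pigeonhole step on triangles and the closure-absorption argument explicit. The observation that surjectivity is never needed also matches the paper's phrasing of the hypothesis.
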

\begin{proof}
	The claim follows from the fact that $\trequiv$ is the reflexive-transitive closure of $\trequiv'$ and all edges of a triangle must have the same color.
\qed\end{proof}

An immediate consequence is a necessary condition for the existence of a NAC-coloring.
\begin{thm}\label{thm:necessaryCondNACcoloring}
	If a graph~$G$ has a NAC-coloring, then there is no subgraph~$H$ of~$G$ such that $V_G=V_H$ and $H$ is \trcon.
\end{thm}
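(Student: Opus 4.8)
The plan is to argue by contradiction, leaning on the two lemmas already in hand. Suppose that $G$ has a NAC-coloring~$\delta$ and, simultaneously, a subgraph~$H$ with $V_H=V_G$ that is \trcon{}. The first step is to apply Lemma~\ref{lem:coloringOfEquivalentEdges}: because $H$ is \trcon{}, all of its edges receive the same color under~$\delta$. Without loss of generality I take that color to be \red{}, so that $E_H\subseteq E^\delta_\red{}$, i.e.\ $H$ sits inside~$G^\delta_\red{}$.

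Next I would observe that a \trcon{} graph is connected in the sense I need: any two of its edges are joined by a chain of triangles, and consecutive edges of such a chain share a vertex, so the edge-incident vertices all lie in a single connected piece. Combined with the hypothesis $V_H=V_G$, this makes~$H$ a connected spanning subgraph of~$G^\delta_\red{}$, and therefore forces $G^\delta_\red{}$ itself to be connected with $V_G$ as its one and only connected component.

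The contradiction then comes from surjectivity of the NAC-coloring. Since $\delta$ uses both colors, there is at least one \blue{} edge $uv\in E_G$. Its endpoints $u,v$ lie in $V_G$, hence in the unique connected component of~$G^\delta_\red{}$. By Lemma~\ref{lem:equivCycleCond} that component is an induced subgraph of~$G$, so the edge~$uv$, joining two of its vertices, must already lie in~$E^\delta_\red{}$; that is, $uv$ is \red{}, contradicting its choice as a \blue{} edge. This would finish the proof.

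The step I expect to need the most care is the passage from \trcon{}-ness to the connectivity actually used, namely that \emph{every} vertex of~$V_G$ sits in the one \red{} component. The definition of \trcon{} refers only to edges, so on its own it does not forbid a stray vertex; what rules this out is precisely the hypothesis $V_H=V_G$ together with the triangle-chain connectivity of the edges of~$H$. Once that point is nailed down, the appeal to Lemma~\ref{lem:equivCycleCond} is immediate and the rest of the argument is routine.
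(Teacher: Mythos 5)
Your proposal is correct and takes essentially the same route as the paper's own proof: Lemma~\ref{lem:coloringOfEquivalentEdges} forces all edges of the spanning \trcon{} subgraph~$H$ to share one color (say \red{}), and then Lemma~\ref{lem:equivCycleCond} together with surjectivity of the NAC-coloring gives the contradiction. The only difference is one of detail: you make explicit the connectivity argument (that all of $V_G$ lies in a single component of $G^\delta_\red{}$, so any \blue{} edge would join two vertices of an induced \red{} component), which the paper compresses into the single sentence ``Since $H$ spans the whole~$G$, also the edges in $E_G\setminus E_H$ must be \red{}.''
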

\begin{proof}
	Assume that there is such a subgraph $H$. By Lemma~\ref{lem:coloringOfEquivalentEdges},
	all edges of $H$ must be colored by the same color, let us say \red{}.
	Since $H$ spans the whole~$G$, also the edges in~$E_G\setminus E_H$ must be \red{} according to Lemma~\ref{lem:equivCycleCond}.
	This contradicts the surjectivity of the NAC-coloring.
\qed\end{proof}
We remark that the fact that a graph is not spanned by a \trcon{} subgraph is a necessary condition, not sufficient.
Some counterexamples, i.e., graphs without a NAC-coloring which are not spanned by a \trcon{} subgraph, are shown in Figure~\ref{fig:countergeneral}.
In Section~\ref{sec:conjecture} we discuss a class of graphs for which this might indeed be a sufficient condition.
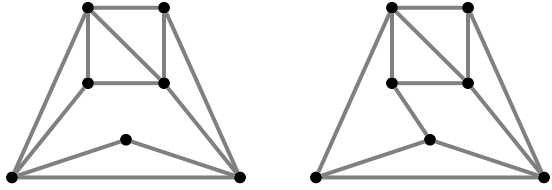
\begin{figure}[ht]
  \begin{center}
    \begin{tikzpicture}
      \begin{scope}[]
        \node[vertex] (a) at (-0.5,-0.75) {};
				\node[vertex] (b) at (0.5,0.5) {};
				\node[vertex] (c) at (1.5,0.5) {};
				\node[vertex] (d) at (2.5,-0.75) {};
				\node[vertex] (e) at (0.5,1.5) {};
				\node[vertex] (f) at (1.5,1.5) {};
				\node[vertex] (g) at (1,-0.25) {};
				\draw[edge] (a)edge(b) (a)edge(d) (a)edge(e) (b)edge(c) (b)edge(e) (c)edge(d) (c)edge(e) (c)edge(f) (d)edge(f) (e)edge(f) (g)edge(a) (g)edge(d);
      \end{scope}
      \begin{scope}[xshift=4cm]
        \node[vertex] (a) at (-0.5,-0.75) {};
				\node[vertex] (b) at (0.5,0.5) {};
				\node[vertex] (c) at (1.5,0.5) {};
				\node[vertex] (d) at (2.5,-0.75) {};
				\node[vertex] (e) at (0.5,1.5) {};
				\node[vertex] (f) at (1.5,1.5) {};
				\node[vertex] (g) at (1,-0.25) {};
				\draw[edge]  (a)edge(d) (a)edge(e) (b)edge(c) (b)edge(e) (c)edge(d) (c)edge(e) (c)edge(f) (d)edge(f) (e)edge(f) (g)edge(a) (g)edge(b) (g)edge(d);
      \end{scope}      
    \end{tikzpicture}
  \end{center}
  \caption{Graphs which are not \trcon\ and have no NAC-coloring}
  \label{fig:countergeneral}
\end{figure}

We focus now on some sufficient conditions for the existence of a NAC-coloring.
Obviously, if a graph is disconnected in a way that at least two components contain an edge,
then a NAC coloring can be obtained by coloring all edges in one of these components to \red{} and the rest to \blue{}.
The coloring is surjective and there might be only \red{} cycles or \blue{} cycles.

The following two theorems and corollary construct a NAC-coloring for connected graphs satisfying certain sufficient conditions.
\begin{thm}\label{thm:independentVertexCut}
	Let~$G$ be a connected graph. If there exists an independent set of vertices $V_{c}$ which separates~$G$, then~$G$ has a NAC-coloring.
\end{thm}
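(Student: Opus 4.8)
The plan is to construct a NAC-coloring explicitly and then verify it through Lemma~\ref{lem:equivCycleCond}, which reduces the no-almost-cycle condition to checking that the connected components of $G^\delta_{\red}$ and $G^\delta_{\blue}$ are induced subgraphs of~$G$. Since $V_c$ separates~$G$, the graph $G-V_c$ has at least two connected components; I would single out one of them with vertex set~$C$ and collect the remaining ones into a vertex set~$D$, so that $V_G = C \cup D \cup V_c$ is a disjoint union.

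The key structural observation, and the place where both hypotheses enter, is that every edge of~$G$ falls into exactly one of four types: inside~$C$, inside~$D$, between~$C$ and~$V_c$, or between~$D$ and~$V_c$. Indeed, there is no edge joining~$C$ to~$D$, because they are unions of distinct components of $G-V_c$, and there is no edge inside~$V_c$, because $V_c$ is independent. I then color \red{} every edge incident to~$C$ (those inside~$C$ together with those from~$C$ to~$V_c$) and \blue{} all the others; a vertex of~$V_c$ may thus meet both colors, which is permitted since a NAC-coloring imposes no condition on adjacent edges.

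It remains to verify the three defining properties. Surjectivity follows because connectivity of~$G$ forces every component of $G-V_c$ to have a neighbour in~$V_c$ (otherwise it would already be a connected component of~$G$), so~$C$ yields a \red{} edge and some component contained in~$D$ yields a \blue{} one. For the cycle condition I would invoke Lemma~\ref{lem:equivCycleCond}: the vertices of~$D$ are isolated in $G^\delta_{\red}$, so the only nontrivial \red{} component is~$C$ together with its neighbours in~$V_c$, and by the edge-type classification every $G$-edge among those vertices is \red{}; symmetrically the vertices of~$C$ are isolated in $G^\delta_{\blue}$ and every $G$-edge inside $D\cup V_c$ is \blue{}. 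Hence all connected components of both colored subgraphs are induced, and $\delta$ is a NAC-coloring. The only delicate point is establishing the completeness of the four-type edge classification — exactly the step consuming the words ``separates'' and ``independent set'' — after which all remaining verifications are routine.
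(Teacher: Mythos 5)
Your proof is correct and takes essentially the same approach as the paper: the paper's coloring marks \red{} exactly those edges meeting a union~$M$ of components of $G-V_c$ (your~$C$) and \blue{} the rest, using the independence of~$V_c$ in the same way to rule out almost cycles. The only cosmetic difference is that you verify the cycle condition via Lemma~\ref{lem:equivCycleCond}, while the paper argues directly that two adjacent edges of different colors must meet in a vertex of~$V_c$, so an almost monochromatic cycle would force an edge inside the independent set; both verifications are routine.
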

\begin{proof}
	We recall that a set of vertices is independent, if no two vertices of it are connected by an edge.
	The assumption that $V_c$ separates $G$	means that the subgraph $H$ of $G$ induced by $V\setminus V_c$ is disconnected.
	Let $M$ be a union of the connected components of $H$ such that $M\neq H$. Set
	\begin{equation*}
	  \delta(e)=
	  \begin{cases} 
			\red{} & \text{if } e\cap V_M \neq \emptyset \\
			\blue{} & \text{otherwise,}
		\end{cases}
	\end{equation*}
	for all $e \in E_G$. Now if~$uv, u'v \in E_G$ are edges such that $\delta(uv)\neq\delta(u'v)$, then~$v\in V_c$.
	Hence, there is no almost \red{} cycle or almost \blue{} cycle since vertices of $V_c$ are nonadjacent by assumption.
\qed\end{proof}

\begin{cor}\label{cor:vertexNotInTriangle}
	Let~$G$ be a connected graph such that $|E_G|\geq 2$. If there is a vertex~$v \in V_G$ such that it is not contained in any triangle $C_3\subset G$,
	then the graph~$G$ has a NAC-coloring.
\end{cor}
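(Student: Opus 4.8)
The plan is to reduce Corollary~\ref{cor:vertexNotInTriangle} to Theorem~\ref{thm:independentVertexCut} by producing, from the triangle-free vertex~$v$, an independent separating set of~$G$. The most natural candidate for this set is the neighborhood of~$v$, and the whole argument hinges on verifying that this neighborhood is independent and that it separates~$G$.

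First I would let $N = N_G(v)$ denote the set of neighbors of~$v$ and argue that $N$ is an independent set. Suppose not: then two neighbors $w_1, w_2 \in N$ are joined by an edge $w_1 w_2 \in E_G$. Together with the edges $v w_1$ and $v w_2$, this produces a triangle $C_3$ on $\{v, w_1, w_2\}$ containing~$v$, contradicting the hypothesis that $v$ lies in no triangle. Hence $N$ is independent.

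Next I would show $N$ separates~$G$. The idea is that once the neighbors of~$v$ are removed, the vertex~$v$ becomes isolated from the rest of the graph, so the induced subgraph on $V_G \setminus N$ is disconnected (with $v$ in a component by itself), provided $V_G \setminus N$ contains at least one vertex other than~$v$. This is where the hypothesis $|E_G| \geq 2$ enters: since $v$ has at least one neighbor and the graph has at least two edges, there is an edge not incident to~$v$ or a vertex beyond $\{v\} \cup N$, guaranteeing that the complement of~$N$ is nonempty apart from~$v$ and that the separation is genuine. I would treat the small cases carefully here to be sure the separating set really does disconnect the induced subgraph rather than leaving it connected or empty.

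Once $N$ is shown to be an independent separating set, Theorem~\ref{thm:independentVertexCut} applies directly and yields a NAC-coloring of~$G$, completing the proof. The main obstacle I anticipate is the degenerate bookkeeping in the separation step: one must rule out the possibility that removing~$N$ leaves only~$v$ (so that the induced subgraph is a single vertex and hence ``connected''), and it is precisely the edge-count hypothesis $|E_G|\ge 2$ that forecloses this. Verifying that this hypothesis is exactly what is needed—neither too strong nor insufficient—is the delicate part; the independence of~$N$ is immediate from the triangle-free assumption.
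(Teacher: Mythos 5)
Your approach is the same as the paper's (take $N=N_G(v)$, note it is independent because $v$ lies in no triangle, and apply Theorem~\ref{thm:independentVertexCut}), but there is a genuine gap in the separation step, and your proposed repair of it is incorrect. You claim that the hypothesis $|E_G|\ge 2$ forecloses the degenerate case $V_G\setminus N=\{v\}$. It does not: take $G=K_{1,n}$ with $n\ge 2$, the star centered at $v$. This graph is connected, has $|E_G|=n\ge 2$, and $v$ lies in no triangle (there are none), yet $V_G=\{v\}\cup N$, all edges are incident to $v$, and removing $N$ leaves the single vertex $v$, whose induced subgraph is connected. So $N$ does not separate $G$, your intermediate assertion that ``there is an edge not incident to $v$ or a vertex beyond $\{v\}\cup N$'' is false, and Theorem~\ref{thm:independentVertexCut} cannot be invoked. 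Note that this degenerate case is \emph{exactly} characterized: since $N$ is independent, $V_G=\{v\}\cup N$ forces $G$ to be the star centered at $v$, so the star is the only obstruction, but it is not excluded by the hypotheses.

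The fix is what the paper does: dispose of the star case separately. A star has no cycles at all, so \emph{any} surjective coloring is a NAC-coloring (color one edge \red{} and the rest \blue{}); the hypothesis $|E_G|\ge 2$ is used precisely here, to make a surjective coloring possible, not to rule the case out. In the remaining case $G$ is not a star centered at $v$, so some vertex lies outside $\{v\}\cup N$; then in the graph induced on $V_G\setminus N$ the vertex $v$ is isolated while another vertex is present, hence that graph is disconnected, $N$ is an independent separating set, and Theorem~\ref{thm:independentVertexCut} finishes the proof exactly as you intended.
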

\begin{proof}
	If $G$ is a star graph, then it has clearly a NAC-coloring.
	Otherwise, the neighbors of~$v$ separate $G$ and are nonadjacent by assumption.
	Thus, there is a NAC-coloring by Theorem~\ref{thm:independentVertexCut}.
\qed\end{proof}

\begin{thm}\label{thm:connectingEdgesCut}
	Let~$G$ be a connected graph, $|E_G|\geq 2$.
	If there is a set $E_c$ of connecting edges of~$G$ such that
	$E_c$ separates~$G$ and the subgraph of~$G$ induced by $E_c$ contains no path of length four,
	then~$G$ has a NAC-coloring.
\end{thm}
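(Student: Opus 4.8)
The plan is to imitate the idea behind Theorem~\ref{thm:independentVertexCut}, but since we now cut along edges instead of vertices I would colour the edges by an ``agreement rule'' attached to the separation, and the hypothesis on paths of length four will turn out to be exactly what is needed to kill the bad configurations. Let $K_1,\dots,K_p$ ($p\ge 2$) be the vertex sets of the connected components of $G-E_c$, the graph obtained by deleting the edges of~$E_c$. I would choose a non-constant map $\psi\colon\{1,\dots,p\}\to\{0,1\}$ (equivalently, let $M$ be a union of some but not all of the~$K_i$ and set $\psi=0$ on~$M$ and $\psi=1$ elsewhere), extend it to vertices by $\phi(v)=\psi(i)$ for $v\in K_i$, and then colour
\[
\delta(uv)=\begin{cases}\red & \text{if }\phi(u)=\phi(v),\\ \blue & \text{if }\phi(u)\neq\phi(v).\end{cases}
\]

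First I would record two elementary facts: every \blue{} edge lies in~$E_c$ (an edge joining two different components of $G-E_c$ must have been deleted), and every internal edge of a~$K_i$ is \red{}. Surjectivity is then the only routine point: a \blue{} edge exists because $G$ is connected and $\psi$ is non-constant, and a \red{} edge exists as soon as some~$K_i$ carries an edge; in the remaining degenerate case $E_G=E_c$, so $G$ coincides with the subgraph induced by~$E_c$, which being connected, triangle-free and without a path of length four is a tree or a $4$-cycle and can be coloured by hand.

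The core of the argument is to check, via Lemma~\ref{lem:equivCycleCond}, that the monochromatic components are induced. For the \red{} graph this is automatic: a \red{} component lies inside a single class $\phi^{-1}(0)$ or $\phi^{-1}(1)$, and any edge of~$G$ between two of its vertices has equal $\phi$-values, hence is \red{}. The whole difficulty is the \blue{} graph, and here is where I expect the obstruction to sit. Suppose a \blue{} component were not induced; then there is a \red{} edge~$xy$ both of whose endpoints lie in one \blue{} component, so there is a \blue{} path~$P$ from~$x$ to~$y$. Since all \blue{} edges lie in~$E_c$, $P$ is a path in the subgraph induced by~$E_c$; and since $\phi$ changes across every \blue{} edge, the length of~$P$ equals its number of $\phi$-changes, which is even because $\phi(x)=\phi(y)$ (the edge~$xy$ being \red{}). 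Thus $P$ has even length at least~$2$.

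It then remains to rule out the two surviving lengths. If $P=x\,z\,y$ has length~$2$, then $xz,zy\in E_c$ together with $xy\in E_G$ form a triangle through the connecting edge~$xz$, which is impossible since connecting edges lie in no triangle. If $P$ has length at least~$4$, it contains a path of length four inside the subgraph induced by~$E_c$, contradicting the hypothesis; odd lengths are already excluded by parity. Hence no such \red{} edge~$xy$ exists, every \blue{} component is induced, and $\delta$ is a NAC-coloring. The main obstacle, and the place where the two colours genuinely interact, is precisely this last step: the naive ``one side \red{}, the other \blue{}'' colouring fails because two adjacent vertices on one side may be reconnected through the other side, and it is the agreement rule (supplying the parity constraint) together with the triangle-free property of~$E_c$ (killing length~$2$) and the no-path-of-length-four hypothesis (killing length~$4$) that jointly forbid every short even detour.
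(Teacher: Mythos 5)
Your proof is correct, but it runs on a different engine than the paper's. The paper first passes to a \emph{minimal} subset $E'_c\subseteq E_c$ that still separates~$G$, colours $E'_c$ \red{} and everything else \blue{}, and lets minimality do the work your parity argument does: a cycle with exactly one cut edge would make that edge redundant in $E'_c$, and minimality also forces $G-E'_c$ to have exactly two components, which is what kills the length-four almost-\red{} cycle (the no-path-of-length-four hypothesis bounds the cycle length by four, and the connecting-edge property kills length three). Surjectivity there falls out of minimality together with $|E_G|\geq 2$. You instead keep all of $E_c$, bipartition the components of $G-E_c$ arbitrarily, and colour by the agreement rule, so that parity of $\phi$-changes replaces minimality: cycles with one crossing edge are impossible outright, and an almost \blue{} cycle forces an even-length \blue{} path inside $E_c$, killed at length two by the connecting-edge property and at length at least four by the hypothesis. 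Your route needs no minimization, produces a whole family of NAC-colorings (one per bipartition, of which the paper's colouring is essentially the special case given by a minimal cut, with colours swapped), and its parity step is arguably more transparent than the paper's component-counting; the price is that surjectivity is no longer automatic, forcing your separate degenerate case $E_G=E_c$ with the by-hand classification (tree or $C_4$), which the paper's minimality dispenses with in one line. Both proofs use the two hypotheses in identical roles, so the skeletons are parallel even though the choice of cut and the verification mechanism genuinely differ.
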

\begin{proof}
	Let $E'_c$ be a minimal subset of $E_c$ which separates~$G$, i.e.,
	$E'_c$ is a minimal subset of $E_c$ such that $G'=(V_G, E_G\setminus E'_c)$ is disconnected.
	Set
	\begin{equation*}
	  \delta(e)=
	  \begin{cases}
			\red{} & \text{if } e\in E'_c \\
			\blue{} & \text{otherwise,}
		\end{cases}
	\end{equation*}
	for all $e \in E_G$.
	The existence of an almost \blue{} cycle with a \red{} edge~$u_rv_r$ contradicts the minimality of $E'_c$:
	since~$u_r$ and~$v_r$ are connected by a \blue{} path, they belong to the same connected component in~$G'$.
	Hence, $E'_c\setminus\{u_ru_v\}$ separates~$G$. 

	On the other hand, assume that there exists an almost \red{} cycle $C$.
	Since there is no path of length four in the subgraph of~$G$ induced by $E_c$, the length of $C$ is three or four.
	The length three contradicts the assumption that the \red{} edges are in~$E_c$,
	i.e., they are connecting edges.
	If $C$ has length four and the \red{} edges are $ u_1u_2, u_2u_3, u_3u_4$,
	then $G'$ has at least three connected components, since there are no almost \blue{} cycles,
	i.e., $u_1$ and~$u_4$ are in a different component than~$u_2$ and~$u_3$, and~$u_2$ and~$u_3$ are not in the same component.
	But the minimality of $E'_c$ implies that $G'$ has only two connected components which is a contradiction.

	The surjectivity of~$\delta$ follows from minimality of $E_c$ and the assumption $|E_G|\geq 2$.
\qed\end{proof}

A graph which does not satisfy the assumption of Theorem~\ref{thm:independentVertexCut} neither Theorem~\ref{thm:connectingEdgesCut},
but still has a NAC-coloring is shown in Figure~\ref{fig:noSeparatingSets}.
\begin{figure}[ht]
  \begin{center}
    \begin{tikzpicture}
      \begin{scope}
        \node[vertex] (a) at (0,0) {};
		\node[vertex] (b) at (2,0) {};
		\node[vertex] (c) at (1,1) {};
		\node[vertex] (d) at (0,3) {};
		\node[vertex] (e) at (2,3) {};
		\node[vertex] (f) at (1,2) {};
		\node[vertex] (g) at (3,1.5) {};
		
		\draw[redge] (a)edge(b) (b)edge(c) (a)edge(c) (d)edge(e) (d)edge(f) (e)edge(f);
		\draw[bedge] (a)edge(d) (c)edge(f) (b)edge(e) (b)edge(g) (g)edge(e);
      \end{scope}
    \end{tikzpicture}
  \end{center}
  \caption{A graph with a NAC-coloring which has no independent separating set of vertices or separating set of connecting edges.}
  \label{fig:noSeparatingSets}
\end{figure}
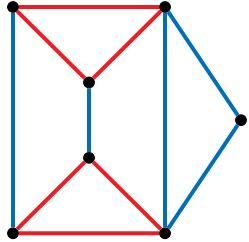

In Figure~\ref{fig:manyedges} we have shown two graphs with $\frac{n(n-1)}{2}-(n-2)$ edges.
Both have a flexible labeling according to Corollary~\ref{cor:vertexNotInTriangle}.
We show now that this is the maximal number of edges that allows a flexible labeling.
\begin{thm}
	Let~$G$ be a graph, $n=|V_G|$. If~$G$ has a flexible labeling, then $|E_G|\leq \frac{n(n-1)}{2}-(n-2)$.
\end{thm}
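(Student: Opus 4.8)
The plan is to prove the contrapositive implicitly by combining the necessary condition for a NAC-coloring (Theorem~\ref{thm:necessaryCondNACcoloring}) with the main theorem (Theorem~\ref{thm:nacflexible}). If $G$ has a flexible labeling, then $G$ has a NAC-coloring, so by Theorem~\ref{thm:necessaryCondNACcoloring} there is no spanning \trcon{} subgraph of $G$. The goal is then to show that the edge count $|E_G| > \frac{n(n-1)}{2}-(n-2)$ forces the existence of a spanning \trcon{} subgraph, which would yield a contradiction. Equivalently, I want to show that a graph on $n$ vertices with strictly more than $\frac{n(n-1)}{2}-(n-2) = \binom{n}{2}-(n-2)$ edges is necessarily spanned by a \trcon{} subgraph (indeed, is itself \trcon{}, since we may take $H=G$).

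First I would reformulate the bound in terms of the complement. Writing $\bar{G}$ for the complement, the condition $|E_G| > \binom{n}{2}-(n-2)$ is equivalent to $|E_{\bar{G}}| < n-2$, i.e.\ $|E_{\bar{G}}| \le n-3$. So the key combinatorial claim becomes: \emph{if the complement of $G$ has at most $n-3$ edges, then $G$ is \trcon{}}. The natural way to attack this is by induction on $n$, peeling off a vertex of high degree in $G$ (equivalently, low degree in $\bar{G}$). Since $\bar{G}$ has at most $n-3$ edges, its average degree is less than $2$, so there exists a vertex $v$ with $\deg_{\bar{G}}(v) \le 1$, meaning $v$ is adjacent in $G$ to at least $n-2$ of the other $n-1$ vertices.

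The main work is to show that such a high-degree vertex can be glued into a \trcon{} structure. The idea is that $v$, together with any two of its $G$-neighbors $u,w$ that are themselves adjacent in $G$, forms a triangle; since $v$ misses at most one vertex and the remaining graph on $V_G\setminus\{v\}$ also has a sparse complement, one expects many triangles through $v$ tying all edges at $v$ together, and these triangles also connect to the \trcon{} structure inherited from $G-v$ by induction. I would set up the induction so that the base cases (small $n$, e.g.\ $n=3$ where $\binom{n}{2}-(n-2)=2$ but the only \trcon{}-relevant fact is that the triangle has all edges equivalent) are handled directly, and in the inductive step verify that (a) $G-v$ still satisfies the hypothesis $|E_{\overline{G-v}}|\le (n-1)-3$ after accounting for the removed edges, so $G-v$ is \trcon{} by induction, and (b) the edges incident to $v$ are $\trequiv$-equivalent to edges of $G-v$ via triangles. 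The delicate bookkeeping is that removing $v$ removes $\deg_{\bar{G}}(v)\le 1$ non-edges from $\bar{G}$, so $\bar{G}-v$ has at most $n-3-0 = n-3$ edges while we need at most $(n-1)-3=n-4$; this is exactly where the hypothesis must be used carefully, and I expect the slack to come from the fact that whenever $\deg_{\bar G}(v)=0$ we can find a better vertex to delete, or from strengthening the inductive statement.

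The hard part, and the step most likely to require a more careful argument than the sketch above, is precisely this inductive bookkeeping: ensuring that after deleting a vertex the remaining complement stays within the budget $n-4$, and simultaneously that $v$'s incident edges attach to the \trcon{} component. A cleaner alternative I would keep in reserve is a direct (non-inductive) argument: pick the at-most-$(n-3)$ non-edges of $G$; show that one can order the vertices $v_1,\dots,v_n$ so that each $v_{k}$ ($k\ge 3$) is joined in $G$ to two \emph{mutually $G$-adjacent} earlier vertices, giving a triangle that merges $v_k$'s edges into the growing \trcon{} block—an elimination-ordering or ``triangle-simplicial'' argument. Establishing that such an ordering always exists when at most $n-3$ non-edges are present is the crux, and I anticipate it is where the extremal threshold $n-2$ becomes sharp, matching the examples of Figure~\ref{fig:manyedges}.
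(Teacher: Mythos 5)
Your reduction is exactly the paper's first step: flexible labeling implies a NAC-coloring (Theorem~\ref{thm:nacflexible}), which by Theorem~\ref{thm:necessaryCondNACcoloring} excludes a spanning \trcon{} subgraph, so it suffices to show that $|E_{\bar{G}}|\le n-3$ forces $G$ to be \trcon{}. But that combinatorial claim is precisely where your proposal stops being a proof: both of your strategies are sketches whose decisive steps you yourself flag as open. Concretely, in the induction, a vertex $v$ with $\deg_{\bar{G}}(v)\le 1$ always exists, but if $\deg_{\bar{G}}(v)=0$ then deleting $v$ removes no edge of $\bar{G}$, so $\overline{G-v}$ may have $n-3>(n-1)-3$ edges and the induction hypothesis does not apply; moreover, there need not be any vertex of $\bar{G}$-degree exactly one (take $\bar{G}$ to be a triangle plus isolated vertices, e.g.\ $n=6$), so the proposed fix ``find a better vertex'' forces you to delete vertices of higher $\bar{G}$-degree, which you have not analyzed. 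Even in the good case $\deg_{\bar{G}}(v)=1$ you still owe the argument that every edge at $v$ lies in a triangle tying it to the \trcon{} structure of $G-v$; this is asserted, not proved. The elimination-ordering alternative is likewise a restatement of what must be shown. So as written the proposal establishes the (correct) reduction but not the theorem.

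The missing idea is that no induction is needed: a single local count of non-edges suffices, and this is how the paper argues. Assume $G$ connected (a disconnected graph has at most $\binom{n}{2}-(n-1)$ edges, so the bound is automatic). If $G$ is not \trcon{}, then there exist two \emph{adjacent} edges $vu_1\nottrequiv vu_2$, since otherwise transitivity of $\trequiv$ along paths of pairwise adjacent edges in a connected graph would make all edges equivalent. Then $u_1u_2\notin E_G$; for every further neighbor $u_i$ of $v$ ($i=3,\dots,k$, where $k=\deg v$), at least one of $u_iu_1$, $u_iu_2$ is a non-edge, since otherwise two triangles would give $vu_1\trequiv vu_i\trequiv vu_2$; and for each of the $n-1-k$ non-neighbors $w$ of $v$, the pair $vw$ is a non-edge. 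These $1+(k-2)+(n-1-k)=n-2$ non-edges are pairwise distinct, whence $|E_G|\le\binom{n}{2}-(n-2)$. This replaces all of your bookkeeping with one configuration around the single vertex where two inequivalent edges meet, and it is where the sharp threshold $n-2$ actually comes from.
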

\begin{proof}
	We can assume that~$G$ is connected, since a disconnected graph has at most $\frac{n(n-1)}{2}-(n-1)$ edges.
	If a graph has a flexible labeling, then it is not \trcon{} by Theorem~\ref{thm:necessaryCondNACcoloring}.
	Hence, there are two adjacent edges~$vu_1$ and~$vu_2$ such that~$vu_1 \nottrequiv vu_2$. Thus, $u_1u_2\notin E_G$. Let $u_3,\ldots,u_k$ be the other neighbors of~$v$.
	For all $i\in \{3,\dots, k\}$, we have~$u_iu_1\notin E_G$ or~$u_iu_2\notin E_G$, otherwise~$vu_1 \trequiv vu_2$.
	Since the complete graph on $n$ vertices has $\frac{n(n-1)}{2}$ edges and $n-1-k$ vertices are not neighbors of~$v$, we have
	\begin{equation*}
		|E_G|\leq \frac{n(n-1)}{2} - 1 - (k-2) - (n-1-k)=\frac{n(n-1)}{2} - (n-2)\,.
	\end{equation*}
\qed\end{proof}

\section{A Conjecture on Laman graphs}\label{sec:conjecture}

In this section, we give a a conjectural characterization of the Laman graphs that have a flexible
labeling. By a computer search, we verified that conjecture for all Laman graphs with at most 12
vertices. In order to prove the conjecture, it would suffice to prove it for ``problematic cases'',
to be defined in Definition~\ref{def:problematicGraphs}.

\begin{conj}
	Let~$G$ be a Laman graph. If~$G$ is not \trcon{}, then it has a NAC-coloring, i.e., it has a flexible labeling.
\end{conj}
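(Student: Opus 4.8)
The plan is to recast the statement as a two-coloring problem on the $\trequiv$-classes. Since $G$ is Laman, any \trcon{} spanning subgraph $H$ satisfies $|E_H|\le 2|V_H|-3=|E_G|$, forcing $H=G$; hence for Laman graphs the conditions ``$G$ is spanned by a \trcon{} subgraph'' and ``$G$ is \trcon{}'' coincide, so Theorem~\ref{thm:necessaryCondNACcoloring} together with Theorem~\ref{thm:nacflexible} reduces the conjecture to the assertion that a Laman graph which is not \trcon{} admits a NAC-coloring. By Lemma~\ref{lem:coloringOfEquivalentEdges} every NAC-coloring is constant on each \trcon{} component, and by Lemma~\ref{lem:equivCycleCond} a two-coloring is NAC exactly when its monochromatic connected components are induced. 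Thus the whole task becomes: assign \red{} or \blue{} to the \trcon{} components $T_1,\dots,T_k$ (with $k\ge 2$, as $G$ is not \trcon{}) so that no two vertices lying in a common monochromatic connected component are joined by an edge of the opposite color. Surjectivity is then automatic.

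Second, I would dispose of the ``reducible'' graphs using the sufficient conditions already at hand. If $G$ has a cut vertex, or more generally an independent separating set, Theorem~\ref{thm:independentVertexCut} supplies a NAC-coloring; in particular any $2$-cut $\{u,v\}$ with $uv\notin E_G$ is handled. A separating family of connecting edges containing no path of length four is handled by Theorem~\ref{thm:connectingEdgesCut}. I would also add a gluing step: along a $2$-separation of $G$ into pieces $G_1,G_2$ sharing an edge $uv$, one recolors NAC-colorings of the pieces so that they agree on $uv$, and a short check via Lemma~\ref{lem:equivCycleCond} shows the amalgam is NAC on $G$. After these reductions only the \emph{problematic cases} of Definition~\ref{def:problematicGraphs} remain: graphs that are essentially $3$-connected, not \trcon{}, and possess no usable separator.

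Third, for the problematic cases I would attempt an induction along the Henneberg construction, since every Laman graph arises from $K_2$ by $0$- and $1$-extensions. Given such a $G$, I would delete the last extension vertex $w$ to obtain a smaller Laman graph $G'$: in a $0$-extension $w$ has two neighbors $u,v$, while in a $1$-extension $w$ has neighbors $u,v,t$ and one reinserts an edge $uv$. The plan is to take a NAC-coloring of $G'$ (available by induction whenever $G'$ is not \trcon{}), and extend it to $w$ and the modified edges while preserving the induced-component condition, choosing the colors of the two or three edges incident to $w$ so that no almost-monochromatic cycle through $w$ is created.

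The \textbf{main obstacle} is precisely this inductive step. A reverse Henneberg move need not preserve the hypothesis: deleting $w$ can merge or split \trcon{} components, so $G'$ may become \trcon{} even when $G$ is not (and then induction supplies no coloring), or $G'$ may fail to be Laman for some choices of reduction. Worse, in a $1$-extension the reinserted edge $uv$ may create a new triangle that fuses two previously distinct components, so extending a given NAC-coloring of $G'$ can be impossible even when $G'$ has one. Guaranteeing that \emph{some} reduction yields a non-\trcon{} Laman child carrying an \emph{extendable} coloring is the crux, and no uniform choice is known, which is exactly why the statement is only conjectural and has been verified merely by computer up to $12$ vertices. As a fallback I would try the direct route: assume no valid two-coloring of the components exists, and from the resulting system of ``forced equal color'' implications among the $T_i$ derive either a \trcon{} spanning structure or a violation of the sparsity count $|E_G|=2|V_G|-3$. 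Turning that implication system into something tractable, rather than an opaque instance of hypergraph $2$-colorability, is the genuinely open part.
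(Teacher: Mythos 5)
This statement is a \emph{conjecture} in the paper: the authors do not prove it, only verify it by computer for Laman graphs with at most 12 vertices and establish the partial result Theorem~\ref{thm:lamanHasNACorProblematic}. Your outline is essentially a reconstruction of that partial result, following the same route: dispose of graphs with an independent separating vertex set or a separating set of connecting edges via Theorems~\ref{thm:independentVertexCut} and~\ref{thm:connectingEdgesCut} (and Corollary~\ref{cor:vertexNotInTriangle}), then induct along the Henneberg construction, and accept that a residual class of graphs -- the problematic graphs of Definition~\ref{def:problematicGraphs} -- is left unresolved. Your diagnosis of the crux is also the paper's: the inductive step can break because the reduced graph may be \trcon{} (so the induction hypothesis gives nothing) or the reinserted edge may be incompatible with the inherited coloring; the paper fights exactly this battle in the type IIb case using Lemma~\ref{lem:undoIIb} and Lemma~\ref{lem:TrCompHasAtLeastTwoDegreeTwo}, and what survives all its reductions is precisely the problematic class, for which no argument is known. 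So your proposal is an accurate map of the state of the problem, and correctly stops where the paper stops.

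One local error worth fixing: you claim a spanning \trcon{} subgraph $H$ of a Laman graph $G$ must equal $G$ because $|E_H|\le 2|V_H|-3=|E_G|$. That inequality (a subgraph has at most $|E_G|$ edges) forces nothing. The claim is true, but for the opposite reason: a \trcon{} graph on $k$ vertices has \emph{at least} $2k-3$ edges (starting from a triangle, each step that attaches a new vertex through a triangle adds at least two edges), so $|E_H|\ge 2|V_G|-3=|E_G|$, whence $H=G$.
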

To support the conjecture, we prove it for a large class of Laman graphs in Theorem~\ref{thm:lamanHasNACorProblematic}. 

For the proof, we need the fact that every Laman graph~$G$ can be constructed by so called Henneberg moves (see for instance \cite{Laman1970}).
This means that there exists a sequence of graphs $G_0,G_1, \dots, G_n=G$ such that $G_0$ is a single edge
and $G_i$ is obtained from $G_{i-1}$ by Henneberg move I or II according to the following definition (see also Figure~\ref{fig:henneberg}).

\begin{defn}\label{def:typesOfHennebergMoves}
	Let $G$ and $G'$ be Laman graphs such that $V_G=\{v\}\cup V_G$, where~$v\notin V_{G'}$. The graph~$G$ is constructed from~$G'$ 
	\begin{enumerate}
		\item by a \emph{Henneberg move I} from vertices~$u,u'\in V_{G'}$ if $E_G=E_{G'}\cup\{uv, u'v\}$.
		If~$uu'\in E$, it is a \emph{Henneberg move of type Ia}, otherwise a \emph{Henneberg move of type Ib}.
		\item by a \emph{Henneberg move of type II} from vertices $u,u_1,u_2 \in V'$ if $u_1u_2\in E'$ and $E_G=\{uv,u_1v,u_2v\}\cup E_{G'}\setminus \{u_1u_2\}$.
		If~$uu_1\notin E$ and~$uu_2\notin E$, it is a \emph{Henneberg move of type IIa}.
		If either~$uu_1\in E$ or~$uu_2\in E$, it is a \emph{Henneberg move of type IIb}.
		If~$uu_1\in E$ and~$uu_2\in E$, it is a \emph{Henneberg move of type~IIc}.
	\end{enumerate}
\end{defn}
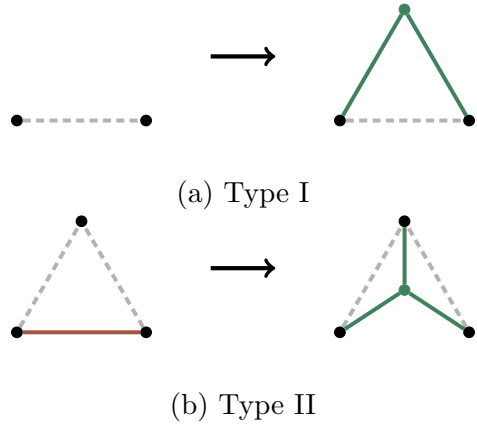
\begin{figure}[ht]
  \begin{center}
    \begin{subfigure}[b]{0.98\textwidth}
			\begin{center}
			  \begin{tikzpicture}[scale=1.7]
					\node[vertex] (a) at (0,0) {};
					\node[vertex] (b) at (1,0) {};
					\node[vertex] (d) at (2.5,0) {};
					\node[vertex] (e) at (3.5,0) {};
					\node[nvertex] (f) at (3,0.866025) {};
					\draw[edgeq] (a)edge(b) (d)edge(e);
					\draw[nedge] (d)edge(f) (e)edge(f);
					\draw[ultra thick,->] (1.5,0.5) -- (2,0.5);
				\end{tikzpicture}%
			\end{center}    
			\caption{Type~I}\label{fig:type1}
		\end{subfigure}
		\begin{subfigure}[b]{0.98\textwidth}
			\begin{center}
				\begin{tikzpicture}[scale=1.7]
					\node[vertex] (a) at (0,0) {};
					\node[vertex] (b) at (1,0) {};
					\node[vertex] (c) at (0.5,0.866025) {};
					\node[vertex] (d) at (2.5,0) {};
					\node[vertex] (e) at (3.5,0) {};
					\node[vertex] (f) at (3,0.866025) {};
					\node[nvertex] (g) at (3,0.33) {};
					\draw[edgeq] (a)edge(c) (b)edge(c) (d)edge(f) (e)edge(f);
					\draw[oedge] (a)edge(b);
					\draw[nedge]  (d)edge(g) (e)edge(g) (f)edge(g);
					\draw[ultra thick,->] (1.5,0.5) -- (2,0.5);
				\end{tikzpicture}
			\end{center}		
			\caption{Type~II}\label{fig:type2}
		\end{subfigure}
  \end{center}
  \caption{Henneberg moves}
  \label{fig:henneberg}
\end{figure}

In Theorem~\ref{thm:lamanHasNACorProblematic} we state a result that holds for a wide range of Laman graphs.
The following definition classifies those graphs for which it does not apply.
However, for all known examples of such graphs, we can still find a NAC-coloring.
One example of such a graph is given in Figure~\ref{fig:problematic12}.
\begin{defn}\label{def:problematicGraphs}
	A Laman graph $G$ is called \textit{problematic}, if the following hold:
	\begin{enumerate}
		\item  $\deg(v)\geq 3$ for all~$v\in V_G$,
		\item  if $\deg(v) =3$, then exactly two neighbors of~$v$ are connected by an edge and both have degree at least 4,
		\item  all vertices are in some triangle $C_3\subset G$.
	\end{enumerate}
\end{defn}
\begin{figure}[ht]
  \begin{center}
    \begin{tikzpicture}[scale=2]
      \node[vertex] (a1) at (0.5, -0.866025) {};
      \node[vertex] (a2) at (1., 0.) {};
      \node[vertex] (a3) at (0.5, 0.866025) {};
      \node[vertex] (a4) at (-0.5, 0.866025) {};
      \node[vertex] (a5) at (-1.,  0.) {};
      \node[vertex] (a6) at (-0.5, -0.866025) {};
      \node[vertex] (i1) at (0, 0.5) {};
      \node[vertex] (i2) at (-0.433013, 0.25) {};
      \node[vertex] (i3) at (-0.433013, -0.25) {};
      \node[vertex] (i4) at (0, -0.5) {};
      \node[vertex] (i5) at (0.433013, -0.25) {};
      \node[vertex] (i6) at (0.433013, 0.25) {};
      \draw[bedge] (a1)edge(a2) (a2)edge(a3) (a3)edge(a4) (a4)edge(a5) (a1)edge(i5) (i5)edge(a2) (a2)edge(i6) (i6)edge(a3) (a3)edge(i1) (i1)edge(a4) (a4)edge(i2) (i2)edge(a5) (i2)edge(i5);
      \draw[redge] (a5)edge(a6) (a6)edge(a1) (a5)edge(i3) (i3)edge(a6) (a6)edge(i4) (i4)edge(a1) (i3)edge(i6) (i4)edge(i1);
    \end{tikzpicture}
  \end{center}
  \caption{A problematic graph on 12 vertices with a NAC-coloring}
  \label{fig:problematic12}
\end{figure}
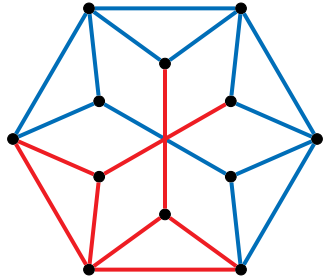

We prove two technical lemmas which are needed in the proof of Theorem~\ref{thm:lamanHasNACorProblematic}.
\begin{lem}\label{lem:undoIIb}
	Let~$G$ be a Laman graph. Let $G$ contain a triangle induced by vertices~$u,v, u_1\in V_G$ such that
	\begin{enumerate}
		\item $\deg_G u =\deg_G v=3$ and $\deg_G u_1>3$,
		\item $wu_2\in E_G$, where~$w\in V_G\setminus \{v, u_1\}$ is the third neighbor of~$u$, and $u_2\in V_G\setminus \{u, u_1\}$ is the third neighbor of~$v$,
		\item $wu_1,u_1u_2\notin E_G$.
	\end{enumerate}
	Then there is a Laman graph $G'=(V',E')$ such that~$G$ is constructed from $G'$ by a Henneberg move IIb by linking the vertex~$v$  to the vertices~$u,u_1,u_2 \in V'$ 
	such that~$uu_1\in E',uu_2\notin E'$ and removing the edge~$u_1u_2\in E'$, or the situation is symmetric by replacing~$v$ by~$u$ and~$w$ by~$u_2$. 
\end{lem}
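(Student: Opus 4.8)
The plan is to exhibit the two candidate reverse moves explicitly and to verify the Laman property by the standard counting argument for reverse Henneberg~II moves, specialised to the configuration of the hypotheses. For a vertex set $S\subseteq V_G$ write $e(S)$ for the number of edges of~$G$ with both endpoints in~$S$; since~$G$ is Laman we have $e(S)\le 2|S|-3$ for every~$S$ with $|S|\ge 2$, and I call $S$ \emph{tight} when equality holds. Set $G'_v=(V_G\setminus\{v\},\,(E_G\setminus\{vu,vu_1,vu_2\})\cup\{u_1u_2\})$, the graph obtained by deleting~$v$ and inserting $u_1u_2$ (legitimate since $u_1u_2\notin E_G$ by~(iii)). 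A direct count gives $|V_{G'_v}|=|V_G|-1$ and $|E_{G'_v}|=|E_G|-2=2|V_{G'_v}|-3$, and the reinstated move linking~$v$ to $u,u_1,u_2$ and deleting $u_1u_2$ is of type~IIb precisely because $uu_1\in E_G$ while $uu_2\notin E_G$ (the latter because the neighbours of~$u$ are $v,u_1,w$ and $u_2\notin\{v,u_1,w\}$). The symmetric graph $G'_u$, obtained by deleting~$u$ and inserting $u_1w$ (with $u_1w\notin E_G$ by~(iii)), is a reverse Henneberg~IIb move at~$u$ via the symmetry $(u,w)\leftrightarrow(v,u_2)$ of the hypotheses. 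So it suffices to prove that at least one of $G'_v,G'_u$ is Laman.

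Since the edge counts are already correct, $G'_v$ fails to be Laman exactly when some vertex set violates the hereditary bound, and inspecting the single new edge $u_1u_2$ shows this happens iff there is a tight set $A\subseteq V_G$ with $u_1,u_2\in A$ and $v\notin A$. Likewise $G'_u$ fails iff there is a tight set~$B$ with $u_1,w\in B$ and $u\notin B$. I will assume both fail, take such $A$ and~$B$, and derive a contradiction with the Laman property of~$G$. First I refine where $A$ and~$B$ sit: if $u\in A$, then since $v\notin A$ and all three neighbours $u,u_1,u_2$ of~$v$ lie in~$A$, the set $A\cup\{v\}$ would carry $e(A)+3=2|A|=2|A\cup\{v\}|-2$ edges, violating the bound; hence $u\notin A$. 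The same argument applied to~$B$ (using $\deg_G u=3$ and $uv\in E_G$) gives $v\notin B$. Thus both $A$ and~$B$ avoid $\{u,v\}$, while $u_1\in A\cap B$.

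Next I show $S:=A\cup B$ is tight. If $|A\cap B|\ge 2$, submodularity of~$e$, namely $e(A)+e(B)\le e(A\cup B)+e(A\cap B)$, together with $e(A)=2|A|-3$ and $e(B)=2|B|-3$ and the Laman lower bounds on $A\cup B$ and $A\cap B$, forces $e(A\cup B)=2|A\cup B|-3$. If instead $A\cap B=\{u_1\}$, then $u_2\in A\setminus B$ and $w\in B\setminus A$, so the edge $wu_2\in E_G$ from hypothesis~(ii) joins $A\setminus B$ to $B\setminus A$; the estimate $e(A\cup B)\ge e(A)+e(B)-e(\{u_1\})+1=2|A\cup B|-3$ compared with the upper bound again yields tightness. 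In either case~$S$ is tight, contains $u_1,u_2,w$, and avoids $u,v$. Finally I adjoin both~$u$ and~$v$: the five distinct edges $uv$, $vu_1$, $vu_2$, $uu_1$, $uw$ all lie in~$G$ with endpoints in $S\cup\{u,v\}$, so $e(S\cup\{u,v\})\ge e(S)+5=2|S|+2=2|S\cup\{u,v\}|-2$, contradicting the Laman bound. Hence one of $G'_v,G'_u$ is Laman, which proves the claim.

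The main obstacle is the tightness step for $S=A\cup B$ in the case where $A$ and~$B$ meet only in~$u_1$: here submodularity alone is too weak, and the proof must invoke hypothesis~(ii) — the edge $wu_2$ — as exactly the crossing edge that supplies the extra unit in the count. The degree conditions in~(i) and the triangle edge $uv$ enter in the refinement forcing $u\notin A$ and $v\notin B$, which is precisely what makes all five edges at~$u$ and~$v$ available upon reinsertion; the remaining verifications (edge counts, the type~IIb bookkeeping, and the distinctness of $u,v,u_1,u_2,w$) are routine consequences of the hypotheses.
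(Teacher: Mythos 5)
Your proof is correct and follows essentially the same route as the paper's: assume both candidate reverse moves (delete $v$ and add $u_1u_2$, or delete $u$ and add $u_1w$) fail, extract the two tight witness sets (the paper's subgraphs $M$ and $L$), and derive a contradiction by counting edges on their union together with $u$ and $v$, with the edge $wu_2$ supplying the extra crossing edge exactly when the sets meet only in $u_1$. The only difference is presentational: where the paper cites Laman's Theorem~6.4 (the reversibility criterion) and Lemma~6.2 (union of tight sets), you re-derive both facts directly via the tight-set characterization and submodularity, making the argument self-contained.
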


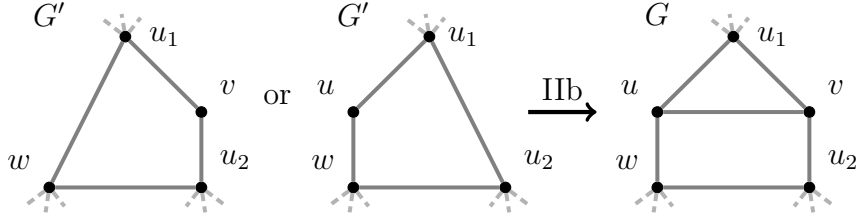
\begin{figure}[ht]
	\centering
	\begin{tikzpicture}[scale=1]
	    \node (G) at (0,2.3) {$G'$};
	    \node[vertex] (w) at (0,0) {};
		\node[vertex] (u2) at (2,0) {};
		\node[vertex] (v) at (2,1) {};
		\node (u) at (0,1) {};
		\node[vertex] (u1) at (1,2) {};			
		\draw[edge] (w) to (u2);	
		\draw[edge] (v) to (u2);
		\draw[edge] (v) to (u1);
		\draw[edge] (w) to (u1);
		\node[above left=0.1cm] at (w) {$w$};
		\node[above right=0.1cm] at (u2) {$u_2$};
		\node[above right=0.1cm] at (v) {$v$};
		\node[right=0.17cm] at (u1) {$u_1$};
		\draw[edgeq] (w) -- +(-0.33,-0.25);
		\draw[edgeq] (w) -- +(-0.05,-0.33);
		\draw[edgeq] (w) -- +(0.2,-0.25);
		\draw[edgeq] (u2) -- +(0.33,-0.25);
		\draw[edgeq] (u2) -- +(0.05,-0.33);
		\draw[edgeq] (u2) -- +(-0.2,-0.25);
		\draw[edgeq] (u1) -- +(-0.32,0.25);
		\draw[edgeq] (u1) -- +(-0.05,0.3);
		\draw[edgeq] (u1) -- +(0.2,0.25);
				
		\node at (3,1.2) {or};		
		
		\node[xshift=4cm] (G) at (G) {$G'$};
		\node[vertex, xshift=4cm] (w) at (w) {};
		\node[vertex, xshift=4cm] (u2) at (u2) {};
		\node[xshift=4cm] (v) at (v) {};
		\node[vertex, xshift=4cm] (u) at (u) {};
		\node[vertex, xshift=4cm] (u1) at (u1) {};			
		\draw[edge] (w) to (u2);	
		\draw[edge] (u) to (w);
		\draw[edge] (u) to (u1);
		\draw[edge] (u2) to (u1);
		\node[above left=0.1cm] at (w) {$w$};
		\node[above right=0.1cm] at (u2) {$u_2$};
		\node[above left=0.1cm] at (u) {$u$};
		\node[right=0.1cm] at (u1) {$u_1$};
		\draw[edgeq] (w) -- +(-0.33,-0.25);
		\draw[edgeq] (w) -- +(-0.05,-0.33);
		\draw[edgeq] (w) -- +(0.2,-0.25);
		\draw[edgeq] (u2) -- +(0.33,-0.25);
		\draw[edgeq] (u2) -- +(0.05,-0.33);
		\draw[edgeq] (u2) -- +(-0.2,-0.25);
		\draw[edgeq] (u1) -- +(-0.32,0.25);
		\draw[edgeq] (u1) -- +(-0.05,0.3);
		\draw[edgeq] (u1) -- +(0.2,0.25);

		\draw [ultra thick,->] (6.3,1) to node[above] {IIb} (7.2,1);
		\node[xshift=4cm] (G) at (G) {$G$};
		\node[vertex, xshift=4cm] (w) at (w) {};
		\node[vertex, xshift=4cm] (u2) at (u2) {};
		\node[vertex, xshift=4cm] (v) at (v) {};
		\node[vertex, xshift=4cm] (u) at (u) {};
		\node[vertex, xshift=4cm] (u1) at (u1) {};			
		\draw[edge] (w) to (u2);	
		\draw[edge] (v) to (u2);
		\draw[edge] (v) to (u1);
		\draw[edge] (w) to (u);
		\draw[edge] (u1) to (u);
		\draw[edge] (v) to (u);
		\node[above left=0.1cm] at (w) {$w$};
		\node[above right=0.1cm] at (u2) {$u_2$};
		\node[above right=0.1cm] at (v) {$v$};
		\node[above left=0.1cm] at (u) {$u$};
		\node[right=0.17cm] at (u1) {$u_1$};
		\draw[edgeq] (w) -- +(-0.33,-0.25);
		\draw[edgeq] (w) -- +(-0.05,-0.33);
		\draw[edgeq] (w) -- +(0.2,-0.25);
		\draw[edgeq] (u2) -- +(0.33,-0.25);
		\draw[edgeq] (u2) -- +(0.05,-0.33);
		\draw[edgeq] (u2) -- +(-0.2,-0.25);
		\draw[edgeq] (u1) -- +(-0.32,0.25);
		\draw[edgeq] (u1) -- +(-0.05,0.3);
		\draw[edgeq] (u1) -- +(0.2,0.25);
	\end{tikzpicture}
	\caption{Undoing Henneberg move IIb}
	\label{fig:undoIIb}
\end{figure}

\begin{proof}
	See Figure~\ref{fig:undoIIb} for the sketch of the situation.
	According to Theorem~6.4 in \cite{Laman1970}, we can revert a Henneberg move II by removing the vertex~$v$ and adding the edge between two of its neighbors, 
	namely~$u_1$ and~$u_2$, if there exists no subgraph $M$ of 
	$G\setminus v=(V_G\setminus \{v\},\{e\in E_G \colon v\notin e\})$ 
	such that~$u_1,u_2\in V_M$ and $|E_M|=2|V_M|-3$. Or the vertex~$u$ can be removed and the edge~$u_1w$ be added
	if there is no subgraph $L$ of $G\setminus u$ such that~$u_1,w\in V_L$ and $|E_L|=2|V_L|-3$.
	
	Assume on the contrary that such subgraphs $M$ and $L$ exist. Obviously, $u\notin V_M$, resp.\ $v\notin V_L$, since otherwise the subgraph $(V_M\cup\{v\})$,
	resp.\ $(V_L\cup\{u\})$, of~$G$ has too many edges. We consider the subgraph $H$ of~$G$ induced by $V_H=V_M\cup V_L \cup \{u,v\}$.
	If $V_M\cap V_L=\{u_1\}$, then
	\begin{align*}
		|E_H|\geq|E_M|+|E_L|+6&=2|V_L|+2|V_M|=2(|V_M \cup V_L|)+2\\
		&=2(|V_M \cup V_L \cup \{u,v\}|)-2=2|V_H|-2\,,
	\end{align*}
	which contradicts Laman condition. If $|V_M\cap V_L|\geq 2$, then the subgraph $M\cup L$ of~$G$ induced by $V_M\cup V_L$ satisfies 
	$|E_{M\cup L}|=2|V_{M\cup L}|-3$ by Lemma~6.2 in  \cite{Laman1970} and we have
	\begin{align*}
		|E_H|&=|E_{M\cup L}|+5=2|V_{M\cup L}|-3+5=2(|V_{M\cup L}|+2)-2\\
		&=2(|V_M \cup V_L \cup \{u,v\}|)-2=2|V_H|-2\,,
	\end{align*}
	which is also a contradiction.
\qed\end{proof}

\begin{lem}\label{lem:TrCompHasAtLeastTwoDegreeTwo}
	If $T$ is a \trcon{} Laman graph, then $T$ has at least two vertices of degree two.
\end{lem}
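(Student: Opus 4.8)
The plan is to argue by induction on $n=|V_T|$, reducing $T$ by inverse Henneberg moves while preserving both the Laman property and \trcon{}-ness. (The single edge is a degenerate case: its two vertices have degree one, so throughout I assume $T$ contains a triangle, i.e.\ $n\geq 3$.) First I would record the structural preliminaries that make the induction work. Since $|E_T|\geq 2$ and $T$ is \trcon{}, no edge is a connecting edge, so every edge lies in a triangle. Moreover $T$ must be $2$-connected: a cut vertex $c$ cannot be crossed by any triangle, so no two edges on opposite sides of $c$ are equivalent under $\trequiv$, contradicting \trcon{}-ness. Finally, the Laman bound applied to the subgraph on $V_T\setminus\{v\}$ shows every vertex has degree at least $2$, and since $\sum_v \deg(v)=2|E_T|=4n-6$ there is always a vertex of degree at most $3$.

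The heart of the proof is the claim that $T$ has at least one vertex of degree $2$; note this genuinely needs \trcon{}-ness, since the $3$-prism is a Laman graph of minimum degree $3$ that fails to be \trcon{}. I would prove it by taking a minimal (in $n$) \trcon{} Laman graph $T$ of minimum degree $\geq 3$ and deriving a contradiction. Such a $T$ has a vertex $v$ of degree exactly $3$; because every edge at $v$ lies in a triangle but $K_4$ is forbidden by the Laman bound, the three neighbours of $v$ induce a path $x\!-\!y\!-\!z$ with $xy,yz\in E_T$ and $xz\notin E_T$. The key observation concerns the middle neighbour $y$: if $\deg_T y=3$ as well, then the neighbours of $y$ are exactly $v,x,z$, so $\{v,y,x,z\}$ spans a ``diamond'' ($K_4$ minus $xz$), and every triangle through any of its five edges stays inside $\{v,y,x,z\}$; hence these edges form a whole $\trequiv$-class, forcing $T$ to equal this diamond and contradicting $n\geq 5$. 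Therefore in the minimal counterexample every degree-$3$ vertex has a middle neighbour of degree $\geq 4$.

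Now I would invoke the reverse direction of the Henneberg characterization of Laman graphs (Theorem~6.4 in \cite{Laman1970}): since $T$ has no degree-$2$ vertex, some degree-$3$ vertex $u$ admits a valid inverse Henneberg~II, which necessarily adds the edge of its unique non-adjacent pair of neighbours and deletes $u$. By the previous paragraph the middle neighbour of $u$ has degree $\geq 4$, so after the move its degree is still $\geq 3$, while the two endpoints of the new edge keep their degrees; thus the resulting Laman graph $T'$ again has minimum degree $\geq 3$. A short rerouting argument (the three edges at $u$ are pendant under $\trequiv$, and the new edge lies in a triangle with the two surviving path-edges) shows $T'$ is still \trcon{}. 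Hence $T'$ is a smaller counterexample, a contradiction; so every \trcon{} Laman graph has a degree-$2$ vertex.

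With a degree-$2$ vertex guaranteed, the remaining induction for the second vertex is comparatively routine. Given a degree-$2$ vertex $v$ with neighbours $a,b$, its only triangle is $vab$, so $ab\in E_T$; and for $n\geq 4$ the edge $ab$ must lie in a further triangle $abc$ (otherwise $\{va,vb,ab\}$ is again an isolated $\trequiv$-class), so $T-v$ is a smaller \trcon{} Laman graph. By induction $T-v$ has two degree-$2$ vertices; removing $v$ only lowered $\deg a$ and $\deg b$, so any such vertex lying outside $\{a,b\}$ is also degree-$2$ in $T$ and, together with $v$, finishes the proof. The only way this could fail is if both degree-$2$ vertices of $T-v$ were $a$ and $b$; but then $a,b$ would have $c$ as their sole common further neighbour, making $c$ a cut vertex of $T$ and contradicting $2$-connectivity (for $n=4$ one checks the diamond directly, where $v$ and $c$ are the two degree-$2$ vertices). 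The main obstacle is thus concentrated entirely in the existence claim of the middle paragraphs: verifying that the inverse Henneberg~II both preserves \trcon{}-ness and keeps the minimum degree at $3$, for which the isolated-diamond observation is exactly what is needed.
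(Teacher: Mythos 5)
Your proof is correct, but it is organized quite differently from the paper's. The paper runs a single induction along the Henneberg construction of $T$: \trcon{}-ness forces the last move to be of type Ia or IIc, and both moves preserve the property ``at least two vertices of degree two'' (Ia because a Laman graph on at least four vertices has no two adjacent degree-two vertices, so the new degree-two vertex plus a surviving old one suffice; IIc because only the apex vertex's degree changes, and if that apex had degree two the previous graph was just a triangle). You instead split the statement in two: first you get \emph{one} degree-two vertex by a minimal-counterexample argument --- the neighbours of a degree-three vertex $v$ must induce a path $x\!-\!y\!-\!z$, the middle vertex $y$ has degree at least $4$ (your isolated-diamond observation), and undoing the type-II move at $v$ (Laman's Theorem~6.4, the same external input the paper uses in Lemma~\ref{lem:undoIIb}) yields a smaller \trcon{} Laman graph of minimum degree at least three, a contradiction; then you get the \emph{second} degree-two vertex by deleting the first one and invoking $2$-connectivity of \trcon{} graphs to rule out the case where both degree-two vertices of $T-v$ are the neighbours of $v$. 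Your route is longer, but it makes explicit two things the paper leaves implicit: that the smaller graphs arising in the induction are still \trcon{} (your rerouting/splicing argument for $\trequiv$-chains), and the structural facts (path structure around degree-three vertices, $2$-connectivity, the closed diamond class) that explain \emph{why} the relevant inverse moves exist; the paper's version buys brevity and a uniform treatment of both move types at the cost of these implicit verifications.
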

\begin{proof}
	The claim is trivial for $|V_T|\leq 4$. We proceed by induction on the Henneberg construction.
	The last Henneberg step in the construction of $T$ can only be of type Ia or IIc. If it is Ia,
	then the number of vertices of the degree two obviously remains or increases, because if $|V_T|\geq 4$,
	then there are no adjacent vertices of degree two.

	If the step is of type IIc, let $T$ be constructed from $T'$ so that a new vertex~$v$ is linked by edges to the vertices~$u,u_1,u_2 \in T_{V'}$,
	$uu_1,uu_2\in E_{T'}$ and the edge~$u_1u_2\in E_{T'}$ is removed.
	Only the degree of~$u$ increases, so we assume that $\deg_{T'} u=2$.
	But then $T'$ is just a triangle, otherwise $T$ is not \trcon{}.
	Hence, the number of vertices of $T$ of degree two is still at least two.
\qed\end{proof}

Now we have all necessary tools to show that the conjecture holds for all Laman graphs
which are not constructed by Henneberg steps from problematic ones.
We prove even more, namely, the conjecture is true if we could show that all problematic graphs have a NAC-coloring.
\begin{thm}\label{thm:lamanHasNACorProblematic}
	Let~$G$ be a Laman graph. If~$G$ is not \trcon{}, then it has a NAC-coloring, 
	or there exists a problematic graph $\widetilde{G}$ with no NAC-coloring such that  
	$G$ can be constructed from $\widetilde{G}$ by Henneberg steps (possibly $G=\widetilde{G}$).
\end{thm}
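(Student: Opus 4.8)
The plan is to proceed by induction on the Henneberg construction of the Laman graph $G$, which is justified because every Laman graph admits such a construction. The base case consists of small Laman graphs (say $|V_G|\leq 4$): one checks directly that a non-\trcon{} graph in this range has a NAC-coloring. For the inductive step, I would consider the last Henneberg move producing $G$ from a Laman graph $G'$ and split into the cases Ia, Ib, IIa, IIb, IIc according to Definition~\ref{def:typesOfHennebergMoves}.

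The strategy in each case is to show that either a separating set giving a NAC-coloring exists directly (invoking Theorem~\ref{thm:independentVertexCut}, Corollary~\ref{cor:vertexNotInTriangle}, or Theorem~\ref{thm:connectingEdgesCut}), or else we can extend a NAC-coloring of a smaller graph, or else we are forced into the problematic case. Concretely: for moves of type Ib, the new vertex~$v$ has two neighbors that are nonadjacent, so~$v$ lies in no triangle, and Corollary~\ref{cor:vertexNotInTriangle} immediately yields a NAC-coloring. For move IIa, the new vertex~$v$ again lies in no triangle (since $uu_1,uu_2\notin E_G$), so Corollary~\ref{cor:vertexNotInTriangle} applies. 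These are the easy cases. For type Ia the new vertex has degree~$2$ and sits in a triangle; here I would try to reduce to $G'$: if $G'$ is not \trcon{} we extend its NAC-coloring (or reduction output) to $G$, and if $G'$ \emph{is} \trcon{} one argues that $G$ itself must then be spanned by a \trcon{} subgraph or handled separately. The genuinely delicate cases are IIb and IIc, together with the degree-$3$ vertices that a NAC-coloring must respect by Lemma~\ref{lem:coloringOfEquivalentEdges}.

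The main obstacle will be the type IIb/IIc analysis, and this is exactly where Lemmas~\ref{lem:undoIIb} and~\ref{lem:TrCompHasAtLeastTwoDegreeTwo} enter. The idea is that a vertex~$v$ of degree~$3$ whose neighbors include a triangle edge is the signature of a problematic graph: condition~(ii) of Definition~\ref{def:problematicGraphs} is precisely the obstruction to reverting a IIc move cleanly. When the inductive configuration matches the hypotheses of Lemma~\ref{lem:undoIIb}, I would undo a IIb move to pass to a strictly smaller Laman graph $G'$, apply the induction hypothesis to $G'$, and then lift the resulting NAC-coloring (or the produced problematic ancestor $\widetilde{G}$) back up to $G$; the lifting works because a Henneberg move only adds a vertex and locally modifies edges, so a NAC-coloring of $G'$ extends by coloring the new edges consistently with the color classes of the attaching vertices, using Lemma~\ref{lem:equivCycleCond} to verify no almost-monochromatic cycle is created. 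When none of the reduction lemmas apply, the degree and triangle constraints force every vertex to satisfy conditions~(i)--(iii) of Definition~\ref{def:problematicGraphs}, so that either $G$ itself is problematic or $G$ descends from a problematic $\widetilde{G}$ by Henneberg steps, which is exactly the alternative in the statement.

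The subtle point requiring care is ensuring the color-extension at each Henneberg step never introduces an almost-monochromatic cycle, and that the reductions terminate; Lemma~\ref{lem:TrCompHasAtLeastTwoDegreeTwo} is used to guarantee the existence of a suitable low-degree vertex to remove, keeping the induction well-founded, while Lemma~\ref{lem:undoIIb} guarantees that the reverse move stays within Laman graphs. I would organize the proof so that the problematic configuration is reached only after all reduction possibilities have been exhausted, thereby cornering the argument into the claimed dichotomy.
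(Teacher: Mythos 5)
Your proposal follows essentially the same route as the paper: induction along the Henneberg construction, the easy disposal of types Ib and IIa via Corollary~\ref{cor:vertexNotInTriangle}, extension of a NAC-coloring of the smaller graph for Ia and IIc, and the use of Lemma~\ref{lem:undoIIb} together with Lemma~\ref{lem:TrCompHasAtLeastTwoDegreeTwo} to crack the IIb case before cornering the remainder into Definition~\ref{def:problematicGraphs}. So the skeleton is right, and this is not a different proof.

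There are, however, two places where your sketch glosses over a point the paper has to work for. First, the induction hypothesis only applies to Laman graphs that are \emph{not} \trcon{}, and after undoing a move you never verify this for the reduced graph. This is precisely where Lemma~\ref{lem:TrCompHasAtLeastTwoDegreeTwo} enters in the paper, and not in the role you assign it: it is not used to ``find a suitable low-degree vertex to remove,'' but to derive a contradiction. If the reduced graph ($G'$ in case IIc, or the graph $G''$ obtained after undoing IIb and then Ib) were \trcon{}, it would have two vertices of degree two; since degrees are unchanged away from the modified vertices, $G$ itself would have a degree-two (or degree-at-most-three in the wrong place) vertex, contradicting the standing reductions. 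Without this step your appeal to the induction hypothesis is unjustified. Second, the IIb analysis needs a finer sub-case split that your sketch omits: one first handles a triangle with three degree-three vertices by Theorem~\ref{thm:connectingEdgesCut}; then, for a triangle $u,v,u_1$ with $\deg u=\deg v=3$, $\deg u_1>3$, one splits on whether $wu_2\in E_G$ --- if not, $\{u_1,u_2,w\}$ is an independent separating set and Theorem~\ref{thm:independentVertexCut} finishes; if so, Lemma~\ref{lem:undoIIb} applies, and the reduction is a \emph{double} undo (IIb followed by Ib, since $u$ has degree two in $G'$), after which the lifted coloring must be specified on all six new edges as in the paper. Your plan names the right tools but the dichotomy $wu_2\in E_G$ versus $wu_2\notin E_G$, and the two-step descent to $G''$, are the actual content of the case and would need to be supplied.
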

\begin{proof}
	We proceed by induction on the Henneberg construction.
	Let $G'$ be a graph such that~$G$ is obtained from $G'=(V',E')$ by adding a vertex~$v$ by a Henneberg move. 
	We distinguish five cases according to Definition~\ref{def:typesOfHennebergMoves}.

	Ia: Let~$v$ be constructed by a Henneberg move of type Ia from vertices~$u,u'$ and~$uu'\in E_G$.
	The graph $G'$ is \trcon{} if and only if~$G$ is \trcon{}. 
	If $G'$ has a NAC-coloring~$\delta'$, then we define a NAC-coloring~$\delta$ of~$G$ by $\delta|_{E'}=\delta'$ and 
	$\delta(uv)=\delta(u'v):=\delta'(uu')$.
	Otherwise, we have a problematic graph $\widetilde{G}$ from the induction hypothesis.

	Ib and IIa: The vertex~$v$ is not in any triangle, so there is a NAC-coloring by Corollary~\ref{cor:vertexNotInTriangle}.

	IIc: We may assume that there are no vertices of degree two in $G$, otherwise a previous case applies.
	Let~$v$ be constructed so that it is linked by edges to the vertices $u,u_1,u_2 \in V'$ such that~$uu_1,uu_2\in E'$ 
	and the edge~$u_1u_2\in E'$ is removed. If $G'$ is not \trcon{}, 
	then there is a problematic graph $\widetilde{G}$ without any NAC-coloring, or $G'$ has a NAC-coloring~$\delta'$ 
	and we define a NAC-coloring~$\delta$ of~$G$ by $\delta|_{E'}=\delta'$ 
	and $\delta(uv)=\delta(u_1v)=\delta(u_2v):=\delta'(u_1u_2)$. 
	Otherwise, $G'$ has at least two vertices of degree two by Lemma~\ref{lem:TrCompHasAtLeastTwoDegreeTwo}.
	But the degrees of vertices in $G$ and $G'$ are the same except for $u$.
	Hence, there must be a vertex of degree two in $G$, which contradicts our assumption.

	IIb: We may assume that there is no $G'$ from which $G$ can be constructed by Ia,b or IIa,c, i.e.,
	all vertices of~$G$ have degree at least four or can be constructed by IIb.
	Note that if $t$ denotes the number of vertices of degree three, then $t\geq 6$:
	$$
	2|V_G|-3=|E_G|=\frac{1}{2} \sum_{w\in V_G} \deg w\geq \frac{1}{2}\cdot 4(|V_G|-t)+\frac{1}{2}\cdot 3 t=2|V_G|-\frac{t}{2}\,.
	$$
	
	If there are three vertices in a triangle in~$G$ such that all of them have degree three,
	then there is a NAC-coloring by Theorem~\ref{thm:connectingEdgesCut}.

	Let there be a triangle induced by vertices~$u,v, u_1\in V_G$ such that $\deg_G u =\deg_G v=3$ and $\deg_G u_1>3$.
	Let~$w\in V_G\setminus \{v, u_1\}$ be the third neighbor of~$u$ and $u_2\in V_G\setminus \{u, u_1\}$ be the third neighbor of~$v$.
	We remark that the vertices~$w$ and~$u_1$, resp.\ $u_1$ and $u_2$, are nonadjacent, otherwise~$u$, resp. $v$, could be constructed by IIc.
	There are two possibilities: if ~$wu_2\notin E_G$, then $\{u_1,u_2,w\}$ separates~$G$ and hence, there
	is a NAC-coloring by Theorem~\ref{thm:independentVertexCut}.

	If~$wu_2\in E_G$, then Lemma~\ref{lem:undoIIb} justifies that there is a $G'$ with the property that the vertex~$v$ is constructed
	by linking to the vertices~$u,u_1,u_2 \in V'$ such that~$uu_1\in E',uu_2\notin E'$ and the edge~$u_1u_2\in E'$ is removed,
	or the situation is symmetric by replacing~$v$ by~$u$ and~$w$ by~$u_2$. Assume the former one, see Figure~\ref{fig:undoIIbIb}.
	Since the degree of~$u$ in~$G'$ is two, $G'$ can be obtained from $G''$ by adding the vertex~$u$ by Henneberg move Ib. 
	
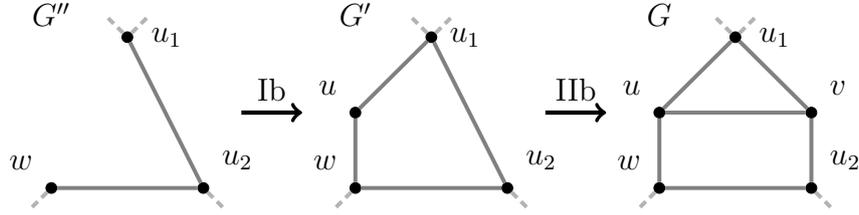
\begin{figure}[ht]
	\centering
	\begin{tikzpicture}[scale=1]
	    \node (G) at (0,2.3) {$G''$};
	    \node[vertex] (w) at (0,0) {};
		\node[vertex] (u2) at (2,0) {};
		\node[] (v) at (2,1) {};
		\node (u) at (0,1) {};
		\node[vertex] (u1) at (1,2) {};	
		\draw[edge] (w) to (u2);
		\draw[edge] (u2) to (u1);
		\node[above left=0.1cm] at (w) {$w$};
		\node[above right=0.1cm] at (u2) {$u_2$};
		\node[right=0.17cm] at (u1) {$u_1$};
		\draw[edgeq] (w) -- +(-0.3,-0.3);
		\draw[edgeq] (u2) -- +(0.3,-0.3);
		\draw[edgeq] (u1) -- +(-0.3,0.3);
		\draw[edgeq] (u1) -- +(0.3,0.3);
				
		\draw [ultra thick,->] (2.5,1) to node[above] {Ib} (3.3,1);		
		
		\node[xshift=4cm] (G) at (G) {$G'$};
		\node[vertex, xshift=4cm] (w) at (w) {};
		\node[vertex, xshift=4cm] (u2) at (u2) {};
		\node[xshift=4cm] (v) at (v) {};
		\node[vertex, xshift=4cm] (u) at (u) {};
		\node[vertex, xshift=4cm] (u1) at (u1) {};			
		\draw[edge] (w) to (u2);	
		\draw[edge] (u) to (w);
		\draw[edge] (u) to (u1);
		\draw[edge] (u2) to (u1);
		\node[above left=0.1cm] at (w) {$w$};
		\node[above right=0.1cm] at (u2) {$u_2$};
		\node[above left=0.1cm] at (u) {$u$};
		\node[right=0.1cm] at (u1) {$u_1$};
		\draw[edgeq] (w) -- +(-0.3,-0.3);
		\draw[edgeq] (u2) -- +(0.3,-0.3);
		\draw[edgeq] (u1) -- +(-0.3,0.3);
		\draw[edgeq] (u1) -- +(0.3,0.3);

		\draw [ultra thick,->] (6.5,1) to node[above] {IIb} (7.3,1);
		\node[xshift=4cm] (G) at (G) {$G$};
		\node[vertex, xshift=4cm] (w) at (w) {};
		\node[vertex, xshift=4cm] (u2) at (u2) {};
		\node[vertex, xshift=4cm] (v) at (v) {};
		\node[vertex, xshift=4cm] (u) at (u) {};
		\node[vertex, xshift=4cm] (u1) at (u1) {};			
		\draw[edge] (w) to (u2);	
		\draw[edge] (v) to (u2);
		\draw[edge] (v) to (u1);
		\draw[edge] (w) to (u);
		\draw[edge] (u1) to (u);
		\draw[edge] (v) to (u);
		\node[above left=0.1cm] at (w) {$w$};
		\node[above right=0.1cm] at (u2) {$u_2$};
		\node[above right=0.1cm] at (v) {$v$};
		\node[above left=0.1cm] at (u) {$u$};
		\node[right=0.17cm] at (u1) {$u_1$};
		\draw[edgeq] (w) -- +(-0.3,-0.3);
		\draw[edgeq] (u2) -- +(0.3,-0.3);
		\draw[edgeq] (u1) -- +(-0.3,0.3);
		\draw[edgeq] (u1) -- +(0.3,0.3);
	\end{tikzpicture}
	\caption{Undoing Henneberg move IIb and Ib (dashed edges illustrate the minimal degrees)}
	\label{fig:undoIIbIb}
\end{figure}

	Since $\deg_G u_1>3$, $\deg_{G} u_2 \geq 3$ and $\deg_G w\geq3$, we have $\deg_{G''} u_1\geq 3$, $\deg_{G''} u_2 \geq 3$ and $\deg_{G''} w\geq2$.
	Assume that $G''$ is \trcon{}. There are at least two vertices of degree two in~$G''$ by Lemma~\ref{lem:TrCompHasAtLeastTwoDegreeTwo}. 
	Hence, there is a vertex~$v'\in V_{G''}\setminus\{u_1,u_2,w\}$ of degree two.
	But $\deg_{G''}v'=\deg_G v'$ which contradicts that all vertices of~$G$ have degree at least three.
	Thus, $G''$ is not \trcon{}.

	By the induction hypothesis, there is a problematic graph $\widetilde{G}$ without any NAC-coloring
	or $G''$ has a NAC-coloring~$\delta''$. 
	If the latter holds, we obtain a NAC-coloring~$\delta$ of~$G$ from~$\delta''$ in the following way: 
	$\delta|_{E''}=\delta''$, $\delta(uw):=\delta''(wu_2)$ 
	and $\delta(u_1v)=\delta(u_2v)=\delta(uu_1)=\delta(uv):=\delta''(u_1u_2)$.
	There is no almost \blue{} or almost \red{} cycle in~$G$, since its existence would contradict that~$\delta''$ is a NAC-coloring of $G''$.

	The remaining graphs are such that all vertices have degree at least three and if~$v$ has degree three,
	then precisely two neighbors~$u,u_1$ of~$v$ are adjacent and have degree at least four.
	If~$G$ has a vertex which is not in any triangle, then there is a NAC-coloring by Corollary~\ref{cor:vertexNotInTriangle}. 
	Hence, $\widetilde{G}=G$ is problematic for the remaining cases.
\qed\end{proof}

\section{Flexible Labelings in Dimension Three}	\label{sec:threeDim}
It turns out that the question of existence of a flexible labeling for a given graph is much easier
when we consider straightforward modification of the definitions replacing $\RR^2$ by $\RR^3$.
We get the following result.
\begin{prop}
	If $G$ is a graph that is not complete, then there exists a labeling with infinitely many non-equivalent compatible realizations in $\RR^3$.
\end{prop}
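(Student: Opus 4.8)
The plan is to use the extra rotational freedom of $\RR^3$: a vertex may be rotated about the line spanned by its neighbours without changing the length of any incident edge. Since $G$ is not complete, fix two non-adjacent vertices $u,v\in V_G$. First I would fix a line $\ell$ contained in the plane $\{z=0\}\subset\RR^3$ and choose a realization $\rho_0$ of $G$ with the following features: every neighbour of $u$ is placed at a distinct point of $\ell$; the vertex $u$ is placed at a point of $\{z=0\}$ lying \emph{off} $\ell$; and all remaining vertices, in particular $v$, are placed in $\{z=0\}$, with $v$ and the other non-neighbours of $u$ put off $\ell$. Adjacent vertices receive distinct positions, so the induced labeling $\lambda(ab)=\norm{\rho_0(a)-\rho_0(b)}$ for $ab\in E_G$ indeed takes values in $\RR_+$.

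Next I would exhibit the flex. For $\theta\in[0,2\pi)$ let $\rho_\theta$ be obtained from $\rho_0$ by rotating the single point $\rho_0(u)$ about the axis $\ell$ through the angle $\theta$, leaving every other vertex fixed. Each $\rho_\theta$ is compatible with $\lambda$: an edge not incident to $u$ has both endpoints fixed, so its length is unchanged; and for an edge $uw\in E_G$ the neighbour $w$ lies on $\ell$, so, since a rotation about $\ell$ fixes $\rho_0(w)$ and is an isometry, $\norm{\rho_\theta(u)-\rho_0(w)}=\norm{\rho_0(u)-\rho_0(w)}$. As there is no edge $uv$, no constraint involving both endpoints of the rotation is violated. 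Hence the whole family $(\rho_\theta)_{\theta}$ realizes $\lambda$.

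Finally I would check that the $\rho_\theta$ are pairwise non-equivalent. Write $W=V_G\setminus\{u\}$, and note that $\rho_\theta$ and $\rho_{\theta'}$ agree on $W$ for all $\theta,\theta'$. If $\sigma$ is a direct isometry of $\RR^3$ with $\sigma\circ\rho_{\theta'}=\rho_\theta$, then $\sigma$ fixes every point of $\rho_0(W)$. Provided $\rho_0(W)$ contains three non-collinear points, the only direct isometry fixing them pointwise is the identity; hence $\rho_\theta(u)=\rho_{\theta'}(u)$, and since $u$ lies off $\ell$ its rotated positions trace a circle of positive radius, so distinct angles give distinct points and $\theta=\theta'$. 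Thus all $\rho_\theta$ are pairwise non-equivalent and $\lambda$ is flexible.

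The only point needing care, and the step I expect to be the main obstacle, is the genericity hypothesis that $\rho_0(W)$ spans a plane. This is easy to arrange when $|V_G|\ge 4$: the neighbours of $u$ sit on $\ell$ while $v$ (and any further non-neighbour of $u$) may be placed off $\ell$, so three non-collinear points are always available, even in the extremal case $G=K_n$ minus the edge $uv$. The genuinely special graphs are only those on at most three vertices, but these are flexible in $\RR^3$ for elementary reasons (already a path on three vertices admits a one-parameter family of non-congruent realizations with fixed edge lengths). I would therefore expect this bookkeeping of small or degenerate cases, rather than the geometric construction, to be the only real work.
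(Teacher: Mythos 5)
Your proof is correct and follows essentially the same idea as the paper: the paper places all vertices other than $u,v$ on a single line and rotates $v$ about that line, exploiting the missing edge $uv$, which is the same rotation-about-an-axis-through-the-neighbours construction you give (with the roles of $u$ and $v$ swapped). Your version is somewhat more careful than the paper's, which does not explicitly verify pairwise non-equivalence of the realizations or discuss the small-vertex cases.
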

\begin{proof}
	Let $u$ and $v$ be two nonadjacent vertices of $G$.
	Let $\alpha \in [0,2\pi]$.
	We define a realization $\rho$ by $\rho(u)=(1,0,0), \rho(v)=(\cos \alpha,1,\sin\alpha)$ and $\rho(w)=(0,y_w,0)$ for $w\in V_G\setminus\{u,v\}$,
	where $y_w$ are pairwise distinct real numbers.
	The induced labeling is independent of $\alpha$ and therefore flexible.
\qed\end{proof}
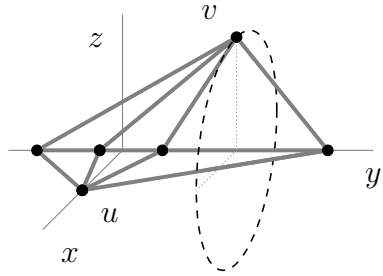
\begin{figure}[ht]
	\centering
	\begin{tikzpicture}[scale=1.5]
	\node[below right=0.1cm] at (-0.7,-0.7) {$x$};
	\node[below=0.1cm] at (2.2,0) {$y$};
	\node[below left=0.1cm] at (0,1.2) {$z$};
	\draw[gray] (0,0) to (0,1.2) {};
	\draw[gray] (-1,0) to (2.2,0) {};
	\draw[gray] (0,0) to (-0.7,-0.7) {};
	\node[vertex] (p) at (-0.2,0) {};
	\node[vertex] (w) at (1.8,0) {};
	\node[vertex] (w2) at (-0.75,0) {};
	\node[vertex] (u) at (-0.353,-0.353) {};
	\node[vertex] (v) at (1,1) {};
	\node[vertex] (ut) at (0.35,0) {};
	\coordinate (s) at (1,0);
	\draw[gray, densely dotted] (s) to (v) {};	
	\draw[gray, densely dotted] (s) to (1-0.353,-0.353) {};	
	
	\draw[edge] (p) to (v);
	\draw[edge] (ut) to (u);
	\draw[edge]  (v) to (w2);		
	\draw[edge] (u) to (w) (u) to (w2);
	\draw[edge] (p) to (u) (p) to (w2);	
	\draw[edge] (ut) to (s);
	\draw[edge] (p) to (ut);
	\draw[edge] (v) to (ut);		
	
	\node[vertex] at (u) {};
	\node[vertex] at (p) {};		

	\draw[dashed, line width=0.6pt, yslant=1] (s) ellipse (0.353cm and 1cm);
	\node[below right=0.1cm] at (u) {$u$};
	\node[above left=0.1cm] at (v) {$v$};

	\draw[edge] (v) to (w)  (s) to (w);
	\draw[edge] ($(u)!0.5!(w)$) to (w);

	\end{tikzpicture}
	\caption{Flexible labeling in $\RR^3$ of a graph with nonadjacent vertices $u$ and $v$}
\end{figure}

\section{Conclusion}
The main result of our work is the combinatorial characterization of existence of a flexible labeling using the concept of NAC-coloring.
Based on this characterization, we provide some necessary and sufficient conditions on a graph to have a NAC-coloring.
We also conjectured a characterization of Laman graphs with a flexible labeling and proved the conjecture for a wide class of graphs.
Laman graphs are interesting class to study since every graph which is not flexible for a generic choice of lengths must have a Laman graph as a subgraph.
We think that NAC-colorings are a good tool for determining non-generic flexible labelings.

Additionally to the conjecture, we are interested in further research questions: can we characterize graphs which have a flexible labeling with injective realizations?
Can we say more about a flexible labeling from a NAC-coloring besides existence? 
For instance, we observed that for a given NAC-coloring, some algebraic equations for lengths of edges can be derived.
We mentioned that an irreducible curve of realizations might give more than one NAC-coloring.
Having the set of all NAC-colorings of a graph, can we determine subsets corresponding to an irreducible curve of realizations?
Is there any procedure which yields all possible flexible modes?


\begin{thebibliography}{10}
\providecommand{\url}[1]{{#1}}
\providecommand{\urlprefix}{URL }
\expandafter\ifx\csname urlstyle\endcsname\relax
  \providecommand{\doi}[1]{DOI~\discretionary{}{}{}#1}\else
  \providecommand{\doi}{DOI~\discretionary{}{}{}\begingroup
  \urlstyle{rm}\Url}\fi

\bibitem{Deuring}
Deuring, M.: Lectures on the theory of algebraic functions of one variable,
  \emph{Lecture notes in mathematics}, vol. 314.
\newblock Springer (1973).
\newblock \doi{10.1007/BFb0060944}

\bibitem{Dixon}
Dixon, A.: {On certain deformable frameworks}.
\newblock Messenger \textbf{29}(2), 1--21 (1899)

\bibitem{Fekete2015}
Fekete, Z., Jord{\'{a}}n, T., Kaszanitzky, V.E.: {Rigid Two-Dimensional
  Frameworks with Two Coincident Points}.
\newblock Graphs and Combinatorics \textbf{31}(3), 585--599 (2015).
\newblock \doi{10.1007/s00373-013-1390-0}

\bibitem{Graver1993}
Graver, J.E., Servatius, B., Servatius, H.: {Combinatorial rigidity,} vol.~2.
\newblock American Mathematical Society (1996)

\bibitem{Jackson2015}
Jackson, B., Jord{\'{a}}n, T., Servatius, B., Servatius, H.: {Henneberg moves
  on mechanisms}.
\newblock Beitrage zur Algebra und Geometrie \textbf{56}(2), 587--591 (2015).
\newblock \doi{10.1007/s13366-014-0217-3}

\bibitem{Laman1970}
Laman, G.: On graphs and rigidity of plane skeletal structures.
\newblock Journal of Engineering Mathematics \textbf{4}, 331--340 (1970).
\newblock \doi{10.1007/BF01534980}

\bibitem{Maehara2001}
Maehara, H., Tokushige, N.: {When does a planar bipartite framework admit a
  continuous deformation?}
\newblock Theoretical Computer Science \textbf{263}(1-2), 345--354 (2001).
\newblock \doi{10.1016/S0304-3975(00)00254-1}

\bibitem{Geiringer1927}
{Pollaczek-Geiringer}, H.: {\"Uber die Gliederung ebener Fachwerke.}
\newblock {Zeitschrift f\"ur Angewandte Mathematik und Mechanik (ZAMM)}
  \textbf{7}, 58--72 (1927).
\newblock \doi{10.1002/zamm.19270070107}

\bibitem{Stachel}
Stachel, H.: On the flexibility and symmetry of overconstrained mechanisms.
\newblock Philosophical Transactions of the Royal Society of London A:
  Mathematical, Physical and Engineering Sciences \textbf{372} (2013).
\newblock \doi{10.1098/rsta.2012.0040}

\bibitem{Streinu2008}
Streinu, I., Theran, L.: {Combinatorial genericity and minimal rigidity}.
\newblock In: {Proceedings of the 24th Annual Symposium on Computational
  geometry - SCG '08}, p. 365. ACM Press (2008).
\newblock \doi{10.1145/1377676.1377738}

\bibitem{WalterHusty}
Walter, D., Husty, M.: On a nine-bar linkage, its possible configurations and
  conditions for paradoxical mobility.
\newblock In: 12th World Congress on Mechanism and Machine Science, IFToMM 2007
  (2007)

\end{thebibliography}
\end{document}